\documentclass[reqno, 11pt]{amsart}
\usepackage[margin = 1.3 in]{geometry}
\usepackage{amsthm,amsmath,amsfonts}
\usepackage{hyperref}
\usepackage{amssymb}
\usepackage{stmaryrd}
\usepackage{eucal}
\usepackage[libertine]{newtxmath}
\usepackage{tikz-cd} 
\usepackage{amssymb}

\usepackage{mathrsfs}
\usepackage{todonotes}

\linespread{1.2}

% Theorem, Definition, Remark 
%\newtheoremstyle{caps}{3ex}{3ex}{\it}{}{\large\headingfont}{.}{5 pt}{}
\usepackage[nameinlink]{cleveref}

\newtheorem{theorem}{Theorem}[section]
\newtheorem{proposition}[theorem]{Proposition}
\newtheorem{lemma}[theorem]{Lemma}

\newtheorem{definition}[theorem]{Definition}

\newtheorem{remark}[theorem]{Remark}
\newtheorem{question}[theorem]{Question}

% macros

\DeclareMathOperator{\Hilb}{Hilb}

\DeclareMathOperator{\Pic}{Pic}

\DeclareMathOperator{\rk}{rk}

\DeclareMathOperator{\Gr}{Gr}

\DeclareMathOperator{\codim}{codim}

\author{Ray Shang}
\title{Interpolation for degree 2 Veroneses of odd dimension}
%\date{\today}

\begin{document}

\begin{abstract}
A classical fact is that through any $d+3$ general points in $d$-dimensional complex projective space $\mathbb{P}_\mathbb{C}^d$, there exists a unique rational normal curve of degree $d$ passing through them. We generalize this by proving the following: when $n$ is odd, for any $\binom{n+2}{2} + n+1$ general points in $\mathbb{P}_\mathbb{C}^{\binom{n+2}{2} - 1}$, there exist at least $2^{n(n-1)}$ degree 2 Veroneses passing through them. This makes substantial progress on a question of Aaron Landesman and Anand Patel, and extends the work of Arthur Coble. 
\end{abstract}

\maketitle

\tableofcontents

\section{Introduction}

The problem of interpolation is one of the oldest questions in mathematics, dating back to at least Euclid, who postulated in his \emph{Elements} that there exists a unique line passing through any two distinct points in the plane. In the 18th century, interpolation began to be studied with the early tools of algebraic geometry. For example, in 1750, Cramer showed that plane curves of degree $n$ pass through $\frac{n(n+3)}{2}$ general points \cite{cramer} and, in 1779, Waring solved the interpolation problem for graphs of polynomial functions \cite{waring}. In its simplest form, the problem of interpolation asks: given a fixed number of general points, does there always exist a projective variety of a given type passing through them? 
\par
The study of interpolation has a wide range of applications. For example, it has applications to Gromov-Witten theory, the slope conjecture, constructing degenerations, and understanding Hilbert schemes, as discussed in \cite[Subsection 1.1]{landesmanDelPezzo}. Interpolation was also used in Eric Larson's resolution of Severi's 1915 Maximal Rank Conjecture \cite{maximalrank}. Outside of mathematics, interpolation is important for the Newton-Cotes method for numerical integration, for Shamir's cryptographic secret sharing protocol \cite{shamir}, and for Reed-Solomon error-correcting codes \cite{reedSolomon}.
\par
In recent years, the literature on the modern study of interpolation has grown tremendously, especially for the interpolation problem of curves. In roughly chronological order, the development of answering the interpolation problem of curves follows the works of Sacchiero in 1980 \cite{sacchiero}, Ellingsrud-Hirschowitz in 1984 \cite{ellingsrudhirschowitz}, Perrin in 1987 \cite{perrin}, Stevens in 1989 and 1996 \cite{stevens} \cite{stevens2}, Ran in 2007 \cite{ran}, Ballico in 2014 \cite{ballico}, Atanasov in 2015 \cite{atansov}, Atanasov-Larson-Yang in 2019 \cite{atansovLarsonYang}, Vogt in 2018 \cite{vogt}, and Larson-Vogt in 2021 and 2023 \cite{larsonVogt2} \cite{larsonVogt}. These developments have led to the first comprehensive answer to the interpolation problem of curves, given by Larson-Vogt \cite[Theorem 1.2]{larsonVogt}. See the introduction of Larson-Vogt (2023) for more details on this development \cite[Introduction]{larsonVogt}. 
\par
Much less is known about interpolation problems for higher-dimensional varieties. To the author's knowledge, the following represents the current literature. In 1922, Arthur Coble showed that degree 2 Veronese surfaces satisfied interpolation \cite{coble}, where a modern exposition can be found in \cite[Remark 5.4]{landesmanDelPezzo}. Weak interpolation was established by David Eisenbud and Sorin Popescu for scrolls of degree $d$ and dimension $k$ with $d \geq 2k-1$ \cite{eisenbudPopescu} in 2000, and interpolation for 2-dimensional scrolls was established by Izzet Coskun \cite{coskun} in 2006. These results were extended and unified by Aaron Landesman's work, which proved that smooth varieties of minimal degree satisfy interpolation \cite[Theorem 1.1]{landesmanMinimalDegree}. Furthermore, Aaron Landesman and Anand Patel showed that all del Pezzo surfaces satisfy weak interpolation \cite[Theorem 1.1]{landesmanDelPezzo}. 

\subsection{Main Result}

Recall that a degree $d$ Veronese variety is the image of an embedding given by the complete linear system $|\mathcal{O}_{\mathbb{P}^n_\mathbb{C}}(d)|$. This paper makes substantial progress on a question of Aaron Landesman and Anand Patel \cite[Question 6 and 7]{landesmanDelPezzo} and adds to the literature on interpolation problems for higher dimensional varieties, specifically for Veronese varieties. It is known that there exists a unique rational normal curve of degree $d$ through any $d+3$ general points in $\mathbb{P}_\mathbb{C}^d$, that there are exactly four degree 2 Veronese surfaces through any 9 general points in $\mathbb{P}_\mathbb{C}^5$, and that there are at least 630 degree 3 Veronese surfaces through any 13 general points in $\mathbb{P}_\mathbb{C}^9$. In this paper, we prove the following.

\begin{theorem}\label{mainResult}
Let $n$ be a positive odd integer. There exist at least $2^{n(n-1)}$ degree 2 Veroneses of dimension $n$ through any $\binom{n+2}{2} + n + 1$ general points in $\mathbb{P}_\mathbb{C}^{\binom{n+2}{2} - 1}$. 
\end{theorem}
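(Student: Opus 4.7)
The plan is to reformulate the interpolation problem in linear-algebraic terms. Identifying $W = \mathbb{C}^{\binom{n+2}{2}}$ with $\Sym^2 V$ for an $(n+1)$-dimensional vector space $V$, a Veronese through the points $p_1, \dots, p_k \in \mathbb{P}(W)$ is equivalent to a linear isomorphism $\phi : W \xrightarrow{\sim} \Sym^2 V$, modulo the action of $\GL(V)$ on $\Sym^2 V$, for which each $\phi(p_i)$ is a rank $1$ symmetric tensor $v_i \otimes v_i$. So a Veronese through the $p_i$'s amounts to a configuration $([v_1], \dots, [v_k]) \in \mathbb{P}(V)^k$ whose image under the degree $2$ Veronese map is projectively equivalent (inside $\mathbb{P}(\Sym^2 V)$) to the given configuration. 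With $k = \binom{n+2}{2} + n + 1$, a dimension count confirms that the evaluation map from the parameter space of Veroneses to $(\mathbb{P}^{\binom{n+2}{2}-1})^k$ has expected relative dimension zero, so the theorem reduces to a lower bound on its generic degree.

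I would pursue this by a degeneration argument. One specializes the $k$ general points to a structured configuration, for instance by forcing many of them onto a fixed lower-dimensional sub-Veronese inside $\mathbb{P}^{\binom{n+2}{2}-1}$, so that the Veroneses through the degenerate configuration can be enumerated by induction on $n$. The base case $n = 1$ (the classical unique conic through $5$ points, giving $2^0 = 1$) and Coble's case $n = 2$ (exactly four Veronese surfaces through $9$ points, giving $2^2 = 4$) both match the formula $2^{n(n-1)}$, suggesting that each inductive step should multiply the count by a factor on the order of $4^{n-1}$. A semicontinuity argument on the appropriate Hilbert scheme would then transport the count to general configurations. An alternative route would be to construct explicitly a group of Cremona-like involutions of $\mathbb{P}^{\binom{n+2}{2}-1}$ that preserve the unordered set $\{p_1,\dots,p_k\}$ and act on the Veroneses through these points with orbits of size at least $2^{n(n-1)}$.

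The restriction to odd $n$ is presumably essential to the method rather than to the truth of the result: when $n$ is odd, $n+1$ is even, permitting symplectic or Pfaffian structures on $V$, or spinor-type structures on $\Sym^2 V$, that are unavailable when $n$ is even and that plausibly organize the binary choices contributing to the $2^{n(n-1)}$ factor. I anticipate two principal obstacles. First is the smoothing step: the Hilbert scheme parameterizing subschemes with the Hilbert polynomial of a degree $2$ Veronese typically has components containing degenerations of Veroneses in addition to honest Veroneses, so one must ensure that the configurations counted in the degenerate setting deform to \emph{smooth} Veroneses rather than to other schemes with the same numerical invariants. Second, and likely the crux, is executing the degenerate count itself: organizing the specializations of Veroneses across a chain of degenerations to produce precisely $2^{n(n-1)}$ limit points will require careful combinatorial bookkeeping, with the doubling at each step presumably reflecting a binary choice in reconstructing a Veronese from its intersection with a suitable hyperplane or sub-Veronese.
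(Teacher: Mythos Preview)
Your proposal is not a proof but a plan, and the plan misses the mechanism that actually produces the number $2^{n(n-1)}$. The paper does not degenerate the \emph{points} and induct on $n$; it instead passes through an auxiliary \emph{curve}. One constructs a smooth curve $C \subset V_{n,2}$ of degree $n(n+1)$ and genus $g = \frac{n(n-1)}{2}$, embedded by a complete non-special linear series, and shows (Proposition~\ref{globalSectionDimensionSpecification}) that degree~$2$ Veroneses containing $C$ are in bijection with square roots of $\mathcal{O}_C(1)$ in $\Pic(C)$. Since a smooth genus-$g$ curve has $2^{2g} = 2^{n(n-1)}$ such square roots, this is where the count comes from --- it is the $2$-torsion of a Jacobian, not an accumulation of binary choices along an induction on $n$. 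The interpolation statement itself (that the map to configurations is dominant and generically finite) is handled separately, via normal bundle interpolation: one must show $H^0(V_{n,2}, \mathcal{N}_{V_{n,2}} \otimes \mathcal{I}_{p_1,\dots,p_k}) = 0$ for some choice of $k = \binom{n+2}{2}+n+1$ points, and this is done by restricting the normal bundle to $C$ and using that the restriction to each rational normal curve is perfectly balanced.

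Your guess about the role of odd $n$ is also off. There is no symplectic or Pfaffian structure in play. The parity enters because the auxiliary curve $C$ is obtained by smoothing a nodal ``chain'' of $\frac{n+1}{2}$ rational normal curves in $\mathbb{P}^n$, each pair glued at $n+1$ nodes; one needs $\frac{n+1}{2}$ to be an integer. The balancedness $\mathcal{N}_{V_{n,2}}|_R \cong \mathcal{O}_{\mathbb{P}^1}(2n+2)^{\oplus \binom{n}{2}}$ on each component then forces sections vanishing at the right number of points to vanish identically, propagating along the chain. None of your proposed ingredients --- sub-Veronese specialization, Cremona-type symmetries, inductive doubling --- appears, and your sketch gives no concrete way to reach the target count or to establish dominance of the incidence map.
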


The techniques we use are quite different from those used in \cite{landesmanDelPezzo} and \cite{landesmanMinimalDegree}, as we approach our interpolation problem by studying normal bundles. More specifically, as asserted in Proposition \ref{goal}, interpolation for degree 2 Veronese varieties can be proven by demonstrating that holomorphic normal vector fields that vanish at certain points on the Veronese must actually vanish everywhere. Thus, the main focus of this paper is to prove the hypothesis of Proposition \ref{goal}. In order to do this, we employ auxiliary curves satisfying certain properties, and we obtain these auxiliary curves by smoothing certain types of nodal curves which we suggestively call rational normal curve chains. Our use of auxiliary curves is inspired by Arthur Coble's work on interpolation for degree 2 Veronese surfaces. Other works that study interpolation in relation to normal bundles include \cite{atansovLarsonYang} and \cite{larsonVogt}. 

\subsection{Outline and notation}

In Section \ref{sectionInterpolationInGeneral}, we state the formal definition of interpolation for integral projective schemes over an algebraically closed field, and its equivalence to normal bundle interpolation. This leads us to Proposition \ref{goal}, the hypothesis of which is the main focus of this paper, which is found in Section \ref{sectionPlanOfAttack}. In Subsection \ref{subsectionCobleArgument}, we interpret Arthur Coble's work on degree 2 Veronese surfaces in terms of normal vector fields. Then in Subsection \ref{subsectionStrategy}, using Subsection \ref{subsectionCobleArgument} as motivation, we state our strategy of employing certain auxiliary curves to show that the hypothesis of Proposition \ref{goal} holds true. In Sections \ref{sectionRNCChain} and \ref{sectionSmoothingRNCChain} we carry out the strategy, thereby proving Theorem \ref{mainResult}. The very last paragraph of Section \ref{sectionSmoothingRNCChain} proves the enumerative lower bound described in Theorem \ref{mainResult}. We conclude in Section \ref{sectionConclusion} with some remarks and further questions. 
\par
Let us now establish some notation used throughout this paper. In this paper, unless otherwise stated, we always work over the complex numbers and we let $\mathbb{P}_\mathbb{C}^n$ denote complex projective space of dimension $n$. Let $\Hilb_{P(t)}^r$ denote the Hilbert scheme parameterizing closed subschemes of $\mathbb{P}_\mathbb{C}^r$ with Hilbert polynomial $P$. If $X \subset \mathbb{P}_\mathbb{C}^r$ is a variety lying on a unique irreducible component of the Hilbert scheme (which, for example, is the case when $[X]$ is a smooth point of the Hilbert scheme), then let $\Hilb_X$ denote that irreducible component. Let $F\Hilb_{P(t)}^r$, where $P(t) = (P_1(t), \cdots, P_m(t))$ is a $m$-tuple of polynomials, denote the flag Hilbert scheme parameterizing flags of closed subschemes $X_1 \subset \cdots \subset X_m \subset \mathbb{P}_\mathbb{C}^r$ where $X_i$ has Hilbert polynomial $P_i(t)$.
\par
If $Y \subset X \subset \mathbb{P}_\mathbb{C}^r$ are smooth subvarieties, such that $[Y]$ and $[X]$ are smooth points of $\Hilb_Y$ and $\Hilb_X$, then let $P(t) = (P_Y(t), P_X(t))$ be the tuple of their respective Hilbert polynomials. Note $F\Hilb_{P(t)}^r$ comes with a natural projection $F\Hilb^r_{P(t)} \to \Hilb_{P_Y(t)}^r$. In the geometric fiber of this projection over $[Y] \in \Hilb^r_{P_Y(t)}$, if the point $([Y], [X])$ lies on a unique irreducible component, then let $\Hilb_X^Y$ denote that irreducible component. 

%Let $F\Hilb_{Y,X}$ denote the unique irreducible component of $F\Hilb_{P(t)}^r$ containing $([Y], [X])$. Note $F\Hilb_{P(t)}^r$ comes with a natural projection $F\Hilb^r_{P(t)} \to \Hilb_{P_Y(t)}^r$. Restrict this projection to $F\Hilb_{Y,X}$, and let $\Hilb^Y_X$ denote the geometric fiber of this projection over $[Y] \in \Hilb_{P_Y(t)}^r$.
\par
Finally, we let $v_{n,2}: \mathbb{P}_\mathbb{C}^n \to \mathbb{P}_\mathbb{C}^{\binom{n+2}{2} - 1}$ denote a degree 2 Veronese embedding, and let $V_{n,2}$ always denote the image of $v_{n,2}$ in $\mathbb{P}_\mathbb{C}^{\binom{n+2}{2} - 1}$. 

\section{Interpolation in general}\label{sectionInterpolationInGeneral}

In this section we recall the formal setup for interpolation problems, and assert and provide references for the fact that, under certain hypotheses, interpolation and vector bundle interpolation for normal bundles are the same. A more complete exposition of the formal aspects of interpolation, as well as proofs of the equivalencies of different ways of proving interpolation, can be found in \cite[Appendix A]{landesmanDelPezzo}.
\par
Interpolation problems, in their simplest form, may be thought of in the following way. Let $H$ be the space of projective varieties of a given type. Suppose we wonder whether there always exists a projective variety of this type passing through $M$ general points in $\mathbb{P}_\mathbb{C}^r$. Then we are wondering whether the projection 
\[
\{ (X, p_1, \cdots, p_M) | X \in H, p_i \in X \} \to \{ (p_1, \cdots, p_M) | p_i \in \mathbb{P}_\mathbb{C}^r \} \cong (\mathbb{P}_\mathbb{C}^r)^M
\]
has dense image. If the image is dense, this would mean for a general collection of $M$ points, we can find a variety of the given type passing through them. The definition of interpolation formalizes this idea. 
\par
Let $X \subset \mathbb{P}_\mathbb{C}^n$ be an integral projective scheme of dimension $k$ lying on a unique irreducible component of the Hilbert scheme parameterizing closed subschemes with the same Hilbert polynomial. Let $\mathcal{H}_X$ denote the irreducible component, taken to have reduced structure, of the Hilbert scheme on which $[X]$ lies. Let $\mathcal{V}_{X}$ denote the universal family over $\mathcal{H}_X$. 

\begin{definition}
     An $m$-tuple of nonnegative integers 
     $\lambda := (\lambda_1, \cdots, \lambda_m)$ is called admissible if:
    \begin{enumerate}
        \item $\lambda_1 \geq \lambda_2 \geq \cdots \geq \lambda_m$,
        \item for all $1 \leq i \leq m$, we have $0 \leq \lambda_i \leq n-k$,
        \item and $\sum_{i=1}^m \lambda_i \leq \dim \mathcal{H}_X$.
    \end{enumerate}
\end{definition}

Let $\Lambda_i \subset \mathbb{P}_\mathbb{C}^n$ be a plane of dimension $n-k-\lambda_i$, for $1 \leq i \leq m$. Define 
 \[
    \Psi := (\mathcal{V}_{\Lambda_1} \times_{\mathbb{P}_\mathbb{C}^n} \mathcal{V}_X) \times_{\mathcal{H}_X} \cdots \times_{\mathcal{H}_X} (\mathcal{V}_{\Lambda_m} \times_{\mathbb{P}_\mathbb{C}^n} \mathcal{V}_X).
    \]
    Noting that $\mathcal{H}_{\Lambda_i} \cong \Gr(n-k-\lambda_i+1, n+1)$, consider the projection 
    \[
    \pi: \Psi \to \mathcal{H}_X \times (\prod_{i=1}^m \Gr(n-k-\lambda_i+1, n+1) ) \times (\mathbb{P}_\mathbb{C}^n)^m \to (\prod_{i=1}^m \Gr(n-k-\lambda_i+1, n+1) ).
    \]
    Define $q$ and $r$ so that $\dim \mathcal{H}_X = q \dot (n-k) + r$ with $0 \leq r < n-k$. 

\begin{definition}\label{interpolationDefinition}
     We say $X$, or $\mathcal{H}_X$, satisfies
    \begin{enumerate}
        \item $\lambda$-interpolation if the projection $\pi$ is surjective, 
        \item weak interpolation if $((n-k)^q)$-interpolation is satisfied,
        \item interpolation if $((n-k)^q, r)$-interpolation is satisfied,
        \item and strong interpolation if $\lambda$-interpolation is satisfied for all admissible $\lambda$.
    \end{enumerate}
\end{definition}

There is also a notion of vector bundle interpolation. 

\begin{definition}\label{vbInterpolationDefinition}

Let $\lambda$ be admissible. Let $E$ be a locally free sheaf on a scheme $X$ with $H^1(X, E) = 0$. Choose points $p_1, \cdots, p_M$ on $X$ and vector subspaces $V_i \subset E|_{p_i}$, for $1 \leq i \leq M$ with $\codim V_i = \lambda_i$. Then define $E'$ so that 
\[
0 \to E' \to E \to \bigoplus_{i=1}^m \frac{E|_{p_i}}{V_i} \to 0
\]
is a short exact sequence of coherent sheaves. We say $E$ satisfies $\lambda$-interpolation if there exist points $p_1, \cdots, p_M$, and subspaces $V_i \subset E|_{p_i}$  as above, such that 
\[
h^0(E) - h^0(E') = \sum_{i=1}^M \lambda_i. 
\]
Write $h^0(E) = q \cdot \rk E + r$ with $0 \leq r < \rk E$. We say $E$ satisfies 
\begin{enumerate}
    \item weak interpolation if it satisfies $((\rk E)^q)$ interpolation,
    \item interpolation if it satisfies $((\rk E)^q, r)$ interpolation,
    \item and strong interpolation if it satisfies $\lambda$-interpolation for all admissible $\lambda$. 
\end{enumerate}
\end{definition}

The following tells us that up to certain hypotheses, proving interpolation in the sense of Definition \ref{interpolationDefinition} is equivalent to showing vector bundle interpolation for the normal bundle $\mathcal{N}_{X/\mathbb{P}_\mathbb{C}^n}$ in the sense of Definition \ref{vbInterpolationDefinition}. 

\begin{theorem}\label{equivalentInterpolation}
    Suppose $X \subset \mathbb{P}_\mathbb{C}^n$ is an integral projective scheme. Furthermore, suppose $H^1(X, \mathcal{N}_{X/\mathbb{P}_\mathbb{C}^n}) = 0$ and $X$ is a local complete intersection. Then the following are equivalent. 
    \begin{enumerate}
        \item $X$ satisfies interpolation.
        \item The map $\pi$ is dominant.
        \item $\mathcal{N}_{X/\mathbb{P}_\mathbb{C}^n}$ satisfies interpolation. 
    \end{enumerate}
\end{theorem}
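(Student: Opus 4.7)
The plan is to reduce all three equivalences to a tangent-space calculation at the smooth point $[X]\in\mathcal{H}_X$, using standard deformation theory. As a setup, I would first record: since $X$ is a local complete intersection, $\mathcal{N}_{X/\mathbb{P}_\mathbb{C}^n}$ is locally free, the tangent space $T_{[X]}\mathcal{H}_X$ is canonically $H^0(X,\mathcal{N}_{X/\mathbb{P}_\mathbb{C}^n})$, and the vanishing hypothesis makes $\mathcal{H}_X$ smooth at $[X]$ of dimension $h^0(\mathcal{N}_{X/\mathbb{P}_\mathbb{C}^n})$.

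For the equivalence (1) $\Leftrightarrow$ (2), the argument is formal. The incidence scheme $\Psi$ is a closed subscheme of a product of projective varieties (each $\mathcal{V}_{\Lambda_i}$ is projective over its Grassmannian base and $\mathcal{V}_X$ is projective over $\mathcal{H}_X$), so $\pi$ is proper and its image is closed. Since the target, a product of Grassmannians, is irreducible, dominance of $\pi$ forces surjectivity.

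For (2) $\Leftrightarrow$ (3), the heart of the argument, I would pick a generic point $([X],[\Lambda_1],p_1,\ldots,[\Lambda_m],p_m)\in\Psi$ and compute $d\pi$. Applying the normal-bundle sequences of $p_i\hookrightarrow\Lambda_i$ and $p_i\hookrightarrow X$ inside $\mathbb{P}_\mathbb{C}^n$, together with transversality of a generic intersection $X\cap\Lambda_i$, identifies $d\pi$ modulo the Grassmannian tangent directions with the restriction map
\[
\mathrm{ev}\from H^0(X,\mathcal{N}_{X/\mathbb{P}_\mathbb{C}^n}) \longrightarrow \bigoplus_{i=1}^m \mathcal{N}_{X/\mathbb{P}_\mathbb{C}^n}|_{p_i}/V_i,
\]
where $V_i\subset \mathcal{N}_{X/\mathbb{P}_\mathbb{C}^n}|_{p_i}$ is the codimension-$\lambda_i$ image of $T_{p_i}\Lambda_i$. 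Because both the source and the target of $\pi$ are smooth, generic smoothness gives dominance of $\pi$ iff $\mathrm{ev}$ is surjective for some (equivalently, a generic) choice, and this is precisely the condition $h^0(E)-h^0(E')=\sum\lambda_i$ in Definition \ref{vbInterpolationDefinition} once the short exact sequence defining $E'$ is unwound.

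The main obstacle is the clean identification of $d\pi$ with $\mathrm{ev}$ and the matching of the $V_i$ with the subspaces in Definition \ref{vbInterpolationDefinition}; this is a diagram chase involving the normal-bundle sequences mentioned above, and it also requires verifying that a sufficiently generic member of $\Psi$ has all intersections $X\cap\Lambda_i$ transverse so that the normal bundle splits correctly along each $p_i$. A complete treatment along these lines appears in \cite[Appendix A]{landesmanDelPezzo}, which I would invoke for the technical details rather than reproduce.
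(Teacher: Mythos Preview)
Your proposal is correct and in fact goes further than the paper does: the paper gives no proof of this theorem at all, instead recording in a remark that it is a special case of \cite[Theorem~A.7]{landesmanDelPezzo}. Your sketch of the tangent-space identification of $d\pi$ with the evaluation map $\mathrm{ev}$ is the standard route to that result, and you end by invoking the same reference for the details, so there is no discrepancy in approach.
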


\begin{remark}
    Assuming the hypotheses of Theorem \ref{equivalentInterpolation}, there are actually 22 different ways of proving (strong) interpolation, as listed in \cite[Theorem A.7]{landesmanDelPezzo}. Theorem \ref{equivalentInterpolation} is a subset of \cite[Theorem A.7]{landesmanDelPezzo}.
\end{remark}

\begin{remark}
In general, Theorem \ref{equivalentInterpolation} holds over algebraically closed fields of characteristic 0, and the characteristic 0 hypothesis is necessary. For example, \cite[Corollary 7.2.9]{landesmanDelPezzo} shows that degree 2 Veronese surfaces over an algebraically closed field of characteristic 2 satisfy interpolation but their normal bundles do not.
\end{remark}

\section{Our plan of attack for degree 2 Veroneses}\label{sectionPlanOfAttack}

In lieu of the previous section, this section explains our strategy for tackling the interpolation problem of degree 2 Veronese varieties. The outline for this section is the following:
\begin{enumerate}
    \item We rephrase our interpolation problem in terms of Veronese normal bundle interpolation.
    \item We review the work of Arthur Coble on interpolation for degree 2 Veronese surfaces to gain insight on the global sections of their normal bundles and the global sections of the restrictions of their normal bundles to elliptic normal sextic curves.
    \item We state our strategy in full. 
\end{enumerate}
With that being said, we begin by rephrasing our interpolation problem in terms of normal bundle interpolation.

\subsection{Veronese normal bundle interpolation}\label{sectionVeroneseNormalBundleInterpolation}

\begin{proposition}\label{goal}
    Let $V_{n,2}$ denote a smooth Veronese variety of dimension $n$ and degree 2. If there exist distinct points $p_i$, for $1 \leq i \leq \binom{n+2}{2} + n + 1$, on $V_{n,2}$ such that 
    \[
    H^0(V_{n,2}, \mathcal{N}_{V_{n,2}/\mathbb{P}_\mathbb{C}^{\binom{n+2}{2} - 1}} \otimes \mathcal{I}_{p_1, \cdots, p_{\binom{n+2}{2} + n + 1}}) = 0,
    \]
    then degree 2 Veronese varieties of dimension $n$ satisfy interpolation. 
\end{proposition}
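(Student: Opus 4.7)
The plan is to reduce the proposition to an application of Theorem \ref{equivalentInterpolation} by matching the hypothesized vanishing with the equality defining vector bundle interpolation in the sense of Definition \ref{vbInterpolationDefinition}. Write $\mathcal{N} := \mathcal{N}_{V_{n,2}/\mathbb{P}_\mathbb{C}^{N}}$ with $N := \binom{n+2}{2} - 1$, and set $M := \binom{n+2}{2} + n + 1$. Smoothness of $V_{n,2}$ gives the local complete intersection hypothesis of Theorem \ref{equivalentInterpolation} for free, so what remains is to verify $H^1(V_{n,2}, \mathcal{N}) = 0$ and then to check that, for the partition distinguished by $h^0(\mathcal{N})$ and $\rk(\mathcal{N})$, $\mathcal{N}$-interpolation reads exactly as the hypothesized vanishing.

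First I would chase the normal bundle sequence
\[
0 \to T_{V_{n,2}} \to T_{\mathbb{P}_\mathbb{C}^{N}}|_{V_{n,2}} \to \mathcal{N} \to 0
\]
together with the Euler sequence on $\mathbb{P}_\mathbb{C}^N$ restricted to $V_{n,2}$. Since $V_{n,2} \cong \mathbb{P}_\mathbb{C}^n$ is embedded by $|\mathcal{O}(2)|$, the line bundle $\mathcal{O}_{V_{n,2}}(1)$ pulls back to $\mathcal{O}_{\mathbb{P}_\mathbb{C}^n}(2)$, which has vanishing higher cohomology; combined with the classical vanishings $H^i(\mathbb{P}_\mathbb{C}^n, T_{\mathbb{P}_\mathbb{C}^n}) = 0$ for $i \geq 1$, this yields $H^1(\mathcal{N}) = 0$ together with
\[
h^0(\mathcal{N}) = (N+1)^2 - (n+1)^2 = \Bigl(\binom{n+2}{2} - (n+1)\Bigr)\Bigl(\binom{n+2}{2} + n + 1\Bigr) = \rk(\mathcal{N}) \cdot M,
\]
where the last equality uses $\rk(\mathcal{N}) = N - n = \tfrac{n(n+1)}{2} = \binom{n+2}{2} - (n+1)$.

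Next I would match Definition \ref{vbInterpolationDefinition}. Writing $h^0(\mathcal{N}) = q \cdot \rk(\mathcal{N}) + r$, the factorization above forces $q = M$ and $r = 0$, so $\mathcal{N}$-interpolation is $((\rk \mathcal{N})^M)$-interpolation. Choosing each subspace $V_i \subset \mathcal{N}|_{p_i}$ to be zero (so $\lambda_i = \rk \mathcal{N}$ for every $i$), the sheaf $E'$ from the definition is precisely $\mathcal{N} \otimes \mathcal{I}_{p_1, \ldots, p_M}$. The defining equality $h^0(\mathcal{N}) - h^0(E') = \sum \lambda_i = M \cdot \rk \mathcal{N} = h^0(\mathcal{N})$ then reduces to $h^0(E') = 0$, which is exactly the hypothesis of the proposition. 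Theorem \ref{equivalentInterpolation} upgrades this to interpolation for $V_{n,2}$.

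No individual step here is a genuine obstacle: the cohomology computations are classical, and the identity $h^0(\mathcal{N}) = \rk(\mathcal{N}) \cdot M$ is a difference-of-squares factorization that fortuitously forces the remainder $r$ to vanish, so that the ``interpolation'' condition collapses to a single clean vanishing of global sections with base conditions. The genuine difficulty, which is the existence half of the problem and which the rest of the paper is devoted to, is to actually produce $M$ points on $V_{n,2}$ at which no global section of $\mathcal{N}$ simultaneously vanishes; that geometric task is of a qualitatively different character and motivates the subsequent auxiliary curve constructions.
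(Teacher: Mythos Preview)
Your proposal is correct and follows essentially the same route as the paper's own proof: verify the local complete intersection and $H^1$ vanishing hypotheses via the Euler and normal bundle sequences, compute $h^0(\mathcal{N}) = \binom{n+2}{2}^2 - (n+1)^2$ and $\rk\mathcal{N} = \binom{n+2}{2} - (n+1)$, observe the difference-of-squares factorization forces $r = 0$ and $q = M$, identify $E'$ with $\mathcal{N}\otimes\mathcal{I}_{p_1,\ldots,p_M}$, and invoke Theorem~\ref{equivalentInterpolation}. The paper is slightly less explicit about the factorization but otherwise the arguments coincide.
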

\begin{proof}
Note that $V_{n,2}$ is a local complete intersection since it is smooth. Furthermore, using the Euler exact sequences on $\mathbb{P}_\mathbb{C}^n$ and $\mathbb{P}_\mathbb{C}^{\binom{n+2}{2} - 1}$ and the dualized conormal short exact sequence for $V_{n,2}$ shows that $H^1(V_{n,2}, \mathcal{N}_{V_{n,2}/\mathbb{P}_\mathbb{C}^{\binom{n+2}{2} - 1}}) = 0$. Then by Theorem \ref{equivalentInterpolation}, interpolation for these Veronese varieties is equivalent to  interpolation for $\mathcal{N}_{V_{n,2}/\mathbb{P}_\mathbb{C}^{\binom{n+2}{2} - 1}}$. 
\par
In addition, the Euler exact sequences on $\mathbb{P}_\mathbb{C}^n$ and $\mathbb{P}_\mathbb{C}^{\binom{n+2}{2} - 1}$ and the dualized conormal short exact sequence for $V_{n,2}$ shows that 
\[
H^0(V_{n,2}, \mathcal{N}_{V_{n,2}/\mathbb{P}_\mathbb{C}^{\binom{n+2}{2} - 1}}) = \binom{n+2}{2}^2 - (n+1)^2.  
\]
The rank of the normal bundle is $\binom{n+2}{2} - n - 1$. Thus, we see that the number of points $M$ in the vector bundle interpolation setup for $\mathcal{N}_{V_{n,2}/\mathbb{P}_\mathbb{C}^{\binom{n+2}{2} - 1}}$, as given in Definition \ref{vbInterpolationDefinition}, should be $\binom{n+2}{2} + n + 1$, and $V_i = \{ 0 \}$ for $1 \leq i \leq \binom{n+2}{2} + n +1$. Then the $E'$ in question, again with respect to the vector bundle interpolation setup as given in  Definition \ref{vbInterpolationDefinition}, is exactly $\mathcal{N}_{V_{n,2}/\mathbb{P}_\mathbb{C}^{\binom{n+2}{2} - 1}} \otimes \mathcal{I}_{p_1, \cdots, p_{\binom{n+2}{2} + n + 1}}$. This can be seen by tensoring the ideal sheaf short exact sequence
\[
0 \to \mathcal{I}_{p_1, \cdots, p_{\binom{n+2}{2} + n + 1}} \to \mathcal{O}_{V_{n,2}} \to \bigoplus_{i=1}^{\binom{n+2}{2} + n+ 1} \mathbb{C}(p_i) \to 0
\]
by $\mathcal{N}_{V_{n,2}/\mathbb{P}_\mathbb{C}^{\binom{n+2}{2} - 1}}$, where the quotient sheaf is supported at the points $p_i$. Altogether, the dimension of the space of global sections of the normal bundle, the rank of the normal bundle, and the number of points required in the interpolation setup imply that to show interpolation for $\mathcal{N}_{V_{n,2}/\mathbb{P}_\mathbb{C}^{\binom{n+2}{2} - 1}}$, it suffices to find distinct points $p_i \in V_{n,2}$ such that
\[
    H^0(V_{n,2}, \mathcal{N}_{V_{n,2}/\mathbb{P}_\mathbb{C}^{\binom{n+2}{2} - 1}} \otimes \mathcal{I}_{p_1, \cdots, p_{\binom{n+2}{2} + n + 1}}) = 0.
\]
\end{proof}

Thus, the main objective of this paper is to show that the hypothesis of Proposition \ref{goal} is true. To motivate our strategy for accomplishing this, we discuss some aspects of Arthur Coble's proof of interpolation for degree 2 Veronese surfaces.

\subsection{Arthur Coble's argument}\label{subsectionCobleArgument}

In 1922, Arthur Coble proved that there were exactly four degree 2 Veronese surfaces
through any 9 general points in $\mathbb{P}_\mathbb{C}^5$. His proof crucially relies on the following lemma. 

\begin{lemma}\cite{coble}\label{lemmaCobleFact}
    There exists a unique elliptic normal sextic (genus 1 and degree 6) curve through any 9 general points in $\mathbb{P}_\mathbb{C}^5$. 
\end{lemma}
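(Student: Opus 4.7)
The plan is to combine a dimension count, an application of Theorem \ref{equivalentInterpolation}, and an additional argument for uniqueness. For an elliptic normal sextic $C \subset \mathbb{P}_\mathbb{C}^5$ one has $\deg \mathcal{T}_C = 2-2g = 0$ and $c_1(\mathcal{T}_{\mathbb{P}_\mathbb{C}^5}|_C) = 6 \cdot \deg \mathcal{O}_C(1) = 36$, so $\deg \mathcal{N}_{C/\mathbb{P}_\mathbb{C}^5} = 36$ and $\chi(\mathcal{N}_{C/\mathbb{P}_\mathbb{C}^5}) = 36$. The Euler sequence on $\mathbb{P}_\mathbb{C}^5$ restricted to $C$, together with the fact that $C$ is embedded by a complete linear system of degree $\geq 2g+1$, gives $H^1(C, \mathcal{T}_{\mathbb{P}_\mathbb{C}^5}|_C) = 0$; combined with the normal sequence $0 \to \mathcal{T}_C \to \mathcal{T}_{\mathbb{P}_\mathbb{C}^5}|_C \to \mathcal{N}_{C/\mathbb{P}_\mathbb{C}^5} \to 0$, this yields $H^1(C, \mathcal{N}_{C/\mathbb{P}_\mathbb{C}^5}) = 0$, so $h^0(\mathcal{N}_{C/\mathbb{P}_\mathbb{C}^5}) = 36$ and the Hilbert scheme is smooth of dimension $36$ at $[C]$. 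The incidence variety $\{(C, p_1, \ldots, p_9) : p_i \in C\}$ then has dimension $45 = \dim (\mathbb{P}_\mathbb{C}^5)^9$, so the natural projection to $(\mathbb{P}_\mathbb{C}^5)^9$ is at best generically finite.

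For existence, Theorem \ref{equivalentInterpolation} reduces the problem to exhibiting $9$ points $p_i \in C$ with $H^0(C, \mathcal{N}_{C/\mathbb{P}_\mathbb{C}^5} \otimes \mathcal{I}_{p_1, \ldots, p_9}) = 0$. The bundle $\mathcal{N}_{C/\mathbb{P}_\mathbb{C}^5}$ has rank $4$, degree $36$, and slope $9$; I would show it is semistable on a general $C$, either via a deformation/degeneration argument or by exhibiting an explicit filtration obtained from restrictions of the Euler sequence. Twisting by an effective divisor $D$ of degree $9$ then yields a rank-$4$ degree-$0$ bundle on $C$, and since a generic semistable bundle of slope $0$ on an elliptic curve has vanishing cohomology, and since $\mathcal{O}_C(D)$ varies over all of $\Pic^9(C) \cong C$ as $D$ varies in $C^{(9)}$, one can choose $D$ so that $H^0(C, \mathcal{N}_{C/\mathbb{P}_\mathbb{C}^5}(-D)) = 0$.

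Uniqueness--i.e., showing the generically finite projection to $(\mathbb{P}_\mathbb{C}^5)^9$ has degree one--is the main obstacle. One approach is to construct an explicit rational inverse: given $9$ general points, the $\binom{7}{2} - 9 = 12$-dimensional space of quadrics through them should contain a distinguished $9$-dimensional subsystem (identifiable via the standard linear free resolution of an elliptic normal sextic, which in this range has ideal generated by $9$ quadrics) whose base locus is the desired sextic, and one would need to verify that this subsystem is uniquely determined by the point configuration. Alternatively, one can degenerate $C$ to a reducible nodal curve of arithmetic genus $1$ and degree $6$--such as a hexagon of lines--count directly how many such configurations pass through $9$ general points, and transport the count back by the flatness of the Hilbert scheme to smooth elliptic normal sextics. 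I expect this uniqueness step to be by far the hardest; the existence half is essentially a normal-bundle semistability computation on an elliptic curve.
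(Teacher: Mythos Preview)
The paper does not give a proof of this lemma; it is stated with a bare citation to Coble and then used as input to the surrounding discussion (with a pointer to \cite{landesmanDelPezzo} for a modern treatment). So there is no in-paper argument to compare yours against.

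Your dimension count is correct, and your reduction of existence to finding nine points with $H^0(C,\mathcal{N}_{C/\mathbb{P}^5}(-D))=0$ is exactly the mechanism of Theorem~\ref{equivalentInterpolation}. The semistability step you leave unproven is in fact known: for an elliptic normal curve the restricted Euler sequence together with Atiyah's classification forces $\mathcal{N}_{C/\mathbb{P}^5}$ to be polystable of slope $9$, after which your argument (choose $D$ so that $\mathcal{O}_C(D)$ avoids the finitely many bad classes in $\Pic^9(C)$) finishes existence cleanly.

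Uniqueness is, as you say, the substantive part, and neither of your sketches is yet a proof. In the quadric approach you have not supplied any mechanism by which the nine points pick out a distinguished $9$-dimensional subsystem of the $12$-dimensional system of quadrics through them; without that, the construction presupposes the curve you are trying to find. The hexagon degeneration can be made to work but requires genuine control of multiplicities and transversality in the central fiber. Coble's actual 1922 argument (and the modern accounts following it) proceeds quite differently: he uses association (Gale duality), under which nine general points in $\mathbb{P}^5$ correspond to nine general points in $\mathbb{P}^2$, and the unique plane cubic through the latter is carried to the unique elliptic normal sextic through the former. If you want a self-contained proof of uniqueness, that is the route to take; it replaces your hardest step with a one-line appeal to the pencil of cubics.
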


For Coble's argument, an elliptic normal sextic acts as an auxiliary curve for the degree 2 Veronese surfaces. This claim is made precise by the following. 

\begin{proposition}\label{2VeroneseSurfaceBijection}
    Fix 9 general points in $\mathbb{P}_\mathbb{C}^5$. Let $C$ be the unique elliptic normal sextic curve passing through those 9 points. There are natural bijections between the following:
    \begin{enumerate}
        \item The set of 2-Veronese surfaces passing through those 9 points. 
        \item The set of 2-Veronese surfaces containing $C$.
        \item The set of degree 3 line bundles $\mathcal{L}$ on $C$ such that $\mathcal{L}^{\otimes 2} \cong \mathcal{O}_C(1)$.
    \end{enumerate}
\end{proposition}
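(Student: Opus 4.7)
The plan is to establish (1) $\Leftrightarrow$ (2) using the uniqueness in Lemma \ref{lemmaCobleFact}, and to establish (2) $\Leftrightarrow$ (3) by pulling back along the Veronese embedding $v_{2,2}$.

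The implication (2) $\Rightarrow$ (1) is immediate. For (1) $\Rightarrow$ (2), suppose $V$ is a Veronese surface through the nine general points $p_1,\ldots,p_9$. Pull back via $v_{2,2}\colon \mathbb{P}^2 \xrightarrow{\sim} V$ to get nine points $q_i \in \mathbb{P}^2$; for sufficiently general $p_i$, the $q_i$ impose independent conditions on $H^0(\mathbb{P}^2, \mathcal{O}(3))$, so there is a unique plane cubic $\tilde C$ through them. Its image $v_{2,2}(\tilde C) \subset V \subset \mathbb{P}_\mathbb{C}^5$ is a genus $1$, degree $6$ curve through $p_1,\ldots,p_9$, and by the uniqueness clause of Lemma \ref{lemmaCobleFact} it must equal $C$. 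Hence $C \subset V$.

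For (2) $\Leftrightarrow$ (3): given $V \supset C$, set $\tilde C := v_{2,2}^{-1}(C) \subset \mathbb{P}^2$, which is a plane cubic since $\deg C = 2\deg \tilde C$; then $\mathcal{L} := \mathcal{O}_{\mathbb{P}^2}(1)|_{\tilde C}$ is a degree $3$ line bundle on $C$ (via $v_{2,2}$), satisfying $\mathcal{L}^{\otimes 2} = v_{2,2}^{*}\mathcal{O}_{\mathbb{P}_\mathbb{C}^5}(1)|_{\tilde C} = \mathcal{O}_C(1)$. Conversely, given $\mathcal{L}$ with $\mathcal{L}^{\otimes 2} \cong \mathcal{O}_C(1)$, Riemann--Roch gives $h^0(\mathcal{L}) = 3$ and the induced map $C \to \mathbb{P}(H^0(\mathcal{L})^{\vee}) \cong \mathbb{P}^2$ is a closed embedding as a plane cubic (every degree $\geq 3$ line bundle on an elliptic curve is very ample). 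Composing with the Veronese of this $\mathbb{P}^2$ yields a Veronese surface in $\mathbb{P}(\Sym^2 H^0(\mathcal{L})^{\vee})$ containing the image of $C$; this ambient $\mathbb{P}^5$ identifies with the original $\mathbb{P}(H^0(C, \mathcal{O}_C(1))^{\vee})$ via the multiplication map $m\colon \Sym^2 H^0(\mathcal{L}) \to H^0(\mathcal{O}_C(1))$, which is an isomorphism: both sides have dimension $6$, and injectivity holds because a plane cubic is not contained in any conic. A direct check then shows the two constructions are mutually inverse.

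The main obstacle will be the genericity argument entering (1) $\Rightarrow$ (2): one must ensure that for general $p_1,\ldots,p_9 \in \mathbb{P}_\mathbb{C}^5$ and for \emph{every} Veronese surface $V$ through them, the pullbacks $q_i \in \mathbb{P}^2$ are in sufficiently general position. This is a Zariski-open condition on the incidence variety $\{(V, p_1,\ldots,p_9) : p_i \in V\}$, so it suffices to exhibit a single pair where the $q_i$ are general; this follows from dimension considerations ($\dim\{\text{Veroneses in }\mathbb{P}_\mathbb{C}^5\} = 35 - 8 = 27 = 3\cdot 9$) together with one explicit example. As a consistency check, the set (3) has cardinality $|\Pic^0(C)[2]| = 4$, recovering Coble's count of exactly four Veronese surfaces through nine general points.
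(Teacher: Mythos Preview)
Your argument is essentially correct and substantially more detailed than the paper's own proof, which is simply a citation to \cite[Proposition~5.4, Theorem~5.6]{landesmanDelPezzo}. Your treatment of (2)~$\Leftrightarrow$~(3) is in fact the $n=2$ case of the argument the paper later gives in Proposition~\ref{globalSectionDimensionSpecification}, so you have independently reconstructed that part of the machinery.

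Two comments on the execution. First, for (1)~$\Rightarrow$~(2) your genericity step can be made sharper than ``one explicit example'': the incidence variety $I=\{(V,p_1,\dots,p_9):p_i\in V\}$ is irreducible of dimension $27+18=45$ (it fibers over the irreducible $27$-dimensional space of Veronese surfaces with fiber $(\mathbb{P}^2)^9$), and the bad locus $B\subset I$ where the $q_i$ fail to impose independent conditions on cubics has fiber dimension $17$ over each $V$, hence $\dim B\le 44$. Thus $\dim\operatorname{Im}(B)\le 44<45$, so a general $(p_1,\dots,p_9)\in(\mathbb{P}^5)^9$ lies outside $\operatorname{Im}(B)$, and \emph{every} Veronese through it has $q_i$ in general position. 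This avoids any appeal to interpolation, which would otherwise be circular here. Second, in (3)~$\Rightarrow$~(2) you should note that the identification $\mathbb{P}(H^0(C,\mathcal{O}_C(1))^\vee)\cong\mathbb{P}^5$ uses that $C$ is linearly normal (which is part of ``elliptic normal sextic''); with that, your multiplication-map argument is clean and the mutual-inverse check is straightforward.
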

\begin{proof}
    This follows from \cite[Proposition 5.4, Theorem 5.6]{landesmanDelPezzo}. 
\end{proof}

\begin{remark}
    Proposition \ref{2VeroneseSurfaceBijection} implies that there are exactly four 2-Veronese surfaces through any 9 general points in $\mathbb{P}^5_\mathbb{C}$.
\end{remark}

From Proposition \ref{2VeroneseSurfaceBijection} we can deduce information about global sections of $\mathcal{N}_{V_{2,2}/\mathbb{P}_\mathbb{C}^5}$ and $\mathcal{N}_{V_{2,2}/\mathbb{P}_\mathbb{C}^5}|_C$. In general, let $Y \subset X \subset \mathbb{P}_\mathbb{C}^r$ be smooth subvarieties, such that $[Y]$ and $[X]$ are smooth points of $\Hilb_Y$ and $\Hilb_X$. Let $P(t) = (P_Y(t), P_X(t))$ be the tuple of their respective Hilbert polynomials. The flag Hilbert scheme $F\Hilb_{P(t)}^r$ then has tangent space 
\[
T_{([Y],[X])} F\Hilb_{P(t)}^r = \ker[ H^0(X, \mathcal{N}_{X/\mathbb{P}_\mathbb{C}^r}) \oplus H^0(Y, \mathcal{N}_{Y/\mathbb{P}_\mathbb{C}^r}) \to H^0(Y, \mathcal{N}_{X/\mathbb{P}_\mathbb{C}^r}|_Y) ]
\]
at smooth point $([Y],[X])$ by \cite[Remark 4.5.4]{sernesi}. Note $F\Hilb_{P(t)}^r$ comes with a natural projection $F\Hilb^r_{P(t)} \to \Hilb_{P_Y(t)}^r$. Let $\Hilb^Y_X$ denote the unique irreducible component of the geometric fiber of this projection over $[Y] \in \Hilb_{P_Y(t)}^r$ which contains $([Y], [X])$. Then the tangent space of $\Hilb^Y_X$ at $([Y], [X])$ is isomorphic to $H^0(X,\mathcal{N}_{X/\mathbb{P}_\mathbb{C}^r} \otimes \mathcal{I}_Y)$.
\par
Let $V_{2,2}$ be a degree 2 Veronese surface. Let $P_9$ be the subvariety that is the 9 points $p_1, \cdots, p_9 \in \mathbb{P}_\mathbb{C}^5$ in general position, and let $C$ be the unique elliptic normal sextic passing through them. Let $\Pic(C)[2]$ denote the 2-torsion subscheme of $\Pic(C)$. Note set (3) of Proposition \ref{2VeroneseSurfaceBijection} is the underlying closed points of a translate of $\Pic(C)[2]$, which is a reduced scheme of finitely many points. Since Proposition \ref{2VeroneseSurfaceBijection} implies we have a bijection on closed points between $\Hilb_{V_{2,2}}^C$ and a translate of $\Pic(C)[2]$, with the former being connected and the latter being normal, we have an isomorphism of $\mathbb{C}$-schemes. Since a reduced scheme of finitely many points has no tangent vectors, this implies 
\[
H^0(V_{2,2}, \mathcal{N}_{V_{2,2}/\mathbb{P}_\mathbb{C}^5} \otimes \mathcal{I}_{C}) = 0.
\]
A priori we know there is a canonical injection between $\Hilb_{V_{2,2}}^C$ and $\Hilb_{V_{2,2}}^{P_9}$, inducing an injection
\[
H^0(V_{2,2}, \mathcal{N}_{V_{2,2}/\mathbb{P}_\mathbb{C}^5} \otimes \mathcal{I}_{C}) \cong
T_{([P_9], [V_{2,2}])} \Hilb_{V_{2,2}}^C \hookrightarrow T_{([C],[V_{2,2}])} \Hilb_{V_{2,2}}^{P_9} \cong H^0(V_{2,2}, \mathcal{N}_{V_{2,2}/\mathbb{P}_\mathbb{C}^5} \otimes \mathcal{I}_{p_1, \cdots, p_9})
\]
on tangent spaces. However, since degree 2 Veronese surfaces satisfy interpolation, we have 
\[
H^0(V_{2,2}, \mathcal{N}_{V_{2,2}/\mathbb{P}_\mathbb{C}^5} \otimes \mathcal{I}_{C}) \cong H^0(V_{2,2}, \mathcal{N}_{V_{2,2}/\mathbb{P}_\mathbb{C}^5} \otimes \mathcal{I}_{p_1, \cdots, p_9}),
\]
by Proposition \ref{goal}. This isomorphism is equivalent to the vanishing of 
\[
H^0(C, \mathcal{N}_{V_{2,2}/\mathbb{P}_\mathbb{C}^5}|_C \otimes \mathcal{I}_{p_1, \cdots, p_9}).
\]

\subsection{Our strategy}\label{subsectionStrategy}

Recall that to show interpolation for degree 2 Veronese varieties, it suffices to find distinct points $p_i \in V_{n,2}$ for $1 \leq i \leq \binom{n+2}{2} + n + 1$, such that 
 \[
    H^0(V_{n,2}, \mathcal{N}_{V_{n,2}/\mathbb{P}_\mathbb{C}^{\binom{n+2}{2} - 1}} \otimes \mathcal{I}_{p_1, \cdots, p_{\binom{n+2}{2} + n + 1}}) = 0,
    \]
by Proposition \ref{goal}. Our strategy is to find an auxiliary curve $C \subset V_{n,2} \subset \mathbb{P}_\mathbb{C}^{\binom{n+2}{2} - 1}$ satisfying the following properties: 

\begin{enumerate}\label{strategy}
	\item $C$ is a smooth curve of degree $n(n+1)$ and genus $\frac{n(n-1)}{2}$ embedded in $\mathbb{P}_\mathbb{C}^{\binom{n+2}{2} - 1}$ by a complete linear system $|\mathcal{O}_C(1)|$, such that $\mathcal{O}_C(1)$ and all its square roots are non-special very ample line bundles. 
	\item There exist points $p_1, \cdots, p_{\binom{n+2}{2} + n + 1}$ on $C$ such that 
 \[
 H^0(C, \mathcal{N}_{V_{n,2}/\mathbb{P}_\mathbb{C}^{\binom{n+2}{2} - 1}}|_C \otimes \mathcal{I}_{p_1, \cdots, p_{\binom{n+2}{2} + n + 1}}) = 0.
 \]
\end{enumerate}

Let us first explain condition (1). In Coble's argument, as discussed in the previous subsection, the elliptic normal sextic is an auxiliary curve in the sense of Proposition \ref{2VeroneseSurfaceBijection}. In particular, there is a bijection between sets (2) and (3) of Proposition \ref{2VeroneseSurfaceBijection}. We seek an analogous auxiliary curve $C$ for the degree 2 Veroneses of higher dimension. In particular, we want the auxiliary curve $C$ to satisfy certain properties so that there is a bijection between 
\[
\{ \text{degree 2 Veronese varieties containing } C  \} \text{  }  \longleftrightarrow \text{  } \{  \mathcal{L} \in \Pic(C) | \mathcal{L}^{\otimes 2} \cong \mathcal{O}_C(1)  \}.
\]
This is because if this bijection holds and $V_{n,2}$ is a degree 2 Veronese variety of dimension $n$ containing $C$, then by the same argument discussed in Subsection \ref{subsectionCobleArgument}, there would be a scheme-theoretic isomorphism between $\Hilb_{V_{n,2}}^C$ and a translate of $\Pic(C)[2]$ and thus $H^0(V_{n,2}, \mathcal{N}_{V_{n,2}/\mathbb{P}_\mathbb{C}^{\binom{n+2}{2} - 1}} \otimes \mathcal{I}_C) = 0$. In particular, there are conditions that we can impose on a curve $C$ to guarantee that the aforementioned bijection holds. 

\begin{proposition}\label{globalSectionDimensionSpecification}
Suppose $C$ is a smooth curve in $\mathbb{P}_\mathbb{C}^{\binom{n+2}{2}-1}$ and all the square roots of $\mathcal{O}_C(1)$ are very ample. If $\dim H^0(C, \mathcal{O}_C(1)) = \binom{n+2}{2}$  and $\dim H^0(C, \mathcal{L}) =  n+1$ for all square roots $\mathcal{L}$ of $\mathcal{O}_C(1)$, then there is a bijective correspondence between
\[
\{ \text{degree 2 Veronese varieties containing } C  \} \text{  }  \longleftrightarrow \text{  } \{  \mathcal{L} \in \Pic(C) | \mathcal{L}^{\otimes 2} \cong \mathcal{O}_C(1)  \}.
\]

\end{proposition}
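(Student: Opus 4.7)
The plan is to build a forward map $V \mapsto \mathcal{L}_V$ from Veroneses containing $C$ to square roots of $\mathcal{O}_C(1)$, a candidate inverse $\mathcal{L} \mapsto V_\mathcal{L}$, and check that the two are mutually inverse. For the forward map: given a degree 2 Veronese $V \supset C$, the Veronese parametrization furnishes an isomorphism $V \cong \mathbb{P}_\mathbb{C}^n$ under which $\mathcal{O}_V(1)$ pulls back to $\mathcal{O}_{\mathbb{P}_\mathbb{C}^n}(2)$. Setting $\mathcal{L}_V := \mathcal{O}_{\mathbb{P}_\mathbb{C}^n}(1)|_C$ (viewing $C$ inside $\mathbb{P}_\mathbb{C}^n$ via this isomorphism) gives $\mathcal{L}_V^{\otimes 2} \cong \mathcal{O}_{\mathbb{P}_\mathbb{C}^n}(2)|_C \cong \mathcal{O}_V(1)|_C \cong \mathcal{O}_C(1)$. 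Well-definedness of the isomorphism class of $\mathcal{L}_V$ follows from the fact that any two parametrizations differ by an automorphism of $\mathbb{P}_\mathbb{C}^n$, which preserves $\mathcal{O}_{\mathbb{P}_\mathbb{C}^n}(1)$ up to isomorphism.

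For the inverse, given a square root $\mathcal{L}$, very ampleness of $\mathcal{L}$ together with $h^0(C,\mathcal{L}) = n+1$ produces an embedding $\phi_\mathcal{L} \from C \hookrightarrow \mathbb{P}(H^0(C,\mathcal{L})^*) \cong \mathbb{P}_\mathbb{C}^n$ by the complete linear system. The central linear-algebraic object is the multiplication map $\mu_\mathcal{L} \from \Sym^2 H^0(C,\mathcal{L}) \to H^0(C,\mathcal{L}^{\otimes 2}) = H^0(C,\mathcal{O}_C(1))$, which is a linear map between equidimensional spaces of dimension $\binom{n+2}{2}$ by the hypotheses. Once $\mu_\mathcal{L}$ is verified to be an isomorphism, composing with the canonical restriction isomorphism $H^0(\mathbb{P}_\mathbb{C}^{\binom{n+2}{2}-1},\mathcal{O}(1)) \xrightarrow{\sim} H^0(C,\mathcal{O}_C(1))$ --- itself an isomorphism because $C$ is embedded non-degenerately by the complete linear system $|\mathcal{O}_C(1)|$, which is forced by the equality $h^0(\mathcal{O}_C(1)) = \binom{n+2}{2}$ and the non-degeneracy of $C$ --- yields a linear identification $\mathbb{P}(\Sym^2 H^0(C,\mathcal{L})^*) \cong \mathbb{P}_\mathbb{C}^{\binom{n+2}{2}-1}$. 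Under this identification, the image of $v_{n,2}\from \mathbb{P}_\mathbb{C}^n \to \mathbb{P}(\Sym^2 H^0(C,\mathcal{L})^*)$ becomes a Veronese $V_\mathcal{L} \subset \mathbb{P}_\mathbb{C}^{\binom{n+2}{2}-1}$, and by construction the composition $v_{n,2} \circ \phi_\mathcal{L}$ recovers the given embedding of $C$, so $C \subset V_\mathcal{L}$.

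The hard part is verifying that $\mu_\mathcal{L}$ is an isomorphism for every square root $\mathcal{L}$. Since the two sides are equidimensional, this reduces to surjectivity, equivalently to the statement that $\phi_\mathcal{L}(C) \subset \mathbb{P}_\mathbb{C}^n$ lies on no quadric. I would first observe that for any $\mathcal{L} = \mathcal{L}_V$ arising from a Veronese $V$, the factorization $H^0(\mathbb{P}_\mathbb{C}^{\binom{n+2}{2}-1},\mathcal{O}(1)) \xrightarrow{\sim} H^0(V,\mathcal{O}_V(1)) \cong \Sym^2 H^0(C,\mathcal{L}_V) \xrightarrow{\mu_{\mathcal{L}_V}} H^0(C,\mathcal{O}_C(1))$ of the restriction isomorphism through $H^0(V,\mathcal{O}_V(1))$ forces $\mu_{\mathcal{L}_V}$ to be an isomorphism. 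For square roots not a priori arising from a Veronese, my plan is a direct geometric argument using the specific degree $\deg\mathcal{L} = n(n+1)/2$ and genus $g = n(n-1)/2$ of $C$ forced by the hypotheses via Riemann-Roch in the non-special regime, to rule out quadric containment; this is the step where I anticipate the main technical difficulty. Once $\mu_\mathcal{L}$ is established universally, that the two maps are mutually inverse is immediate from their construction, as the same canonical identifications are used in both directions.
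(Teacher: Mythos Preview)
Your overall architecture matches the paper's: the forward map $V \mapsto \mathcal{L}_V$ is identical, and your backward construction via the multiplication map $\mu_\mathcal{L}$ is precisely what the paper is doing when it asserts ``there is a degree 2 Veronese map whose composition with $|\mathcal{L}|$ is $|\mathcal{O}_C(1)|$.''

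On the step you flag as the main difficulty --- surjectivity of $\mu_\mathcal{L}$ for an arbitrary square root $\mathcal{L}$ --- the paper does not supply an argument either; the quoted sentence simply asserts it. So you have not missed something the paper provides. One caution about your intended attack: you propose to pin down $\deg\mathcal{L}$ and $g(C)$ via Riemann--Roch ``in the non-special regime,'' but non-specialness of $\mathcal{L}$ and of $\mathcal{O}_C(1)$ is \emph{not} among the proposition's hypotheses, so those numerical values are not forced in the stated generality. (Non-specialness is, however, arranged in the paper's actual application.)

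Where the two arguments differ in substance is the verification that the backward map is well-defined and inverse to the forward one. You work canonically: once $\mu_\mathcal{L}$ is an isomorphism, unwinding the identifications $\mathbb{P}(H^0(\mathcal{L})^*)\cong\mathbb{P}^n$ and $\mathbb{P}(\Sym^2 H^0(\mathcal{L})^*)\cong\mathbb{P}^{\binom{n+2}{2}-1}$ gives both $\mathcal{L}_{V_\mathcal{L}}=\mathcal{L}$ and $V_{\mathcal{L}_V}=V$ directly. (The second implicitly uses that $C$ is non-degenerate in $\mathbb{P}^n$ under any parametrization of $V$, so that $H^0(\mathbb{P}^n,\mathcal{O}(1))\to H^0(C,\mathcal{L}_V)$ is an isomorphism; you should make that explicit.) The paper instead proves injectivity of the forward map by hand: given two Veroneses $X_1,X_2$ containing $C$ with projectively equivalent preimages of $C$ in $\mathbb{P}^n$, it composes a projective equivalence $\sigma$ taking $X_1$ to $X_2$ with an automorphism $\sigma'$ stabilizing $X_2$ and carrying $\sigma(C)$ back to $C$; the composite $\sigma'\circ\sigma$ then fixes $C$ pointwise, and since $C$ spans $\mathbb{P}^{\binom{n+2}{2}-1}$ this forces $\sigma'\circ\sigma=\mathrm{id}$, whence $X_1=X_2$. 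Your canonical route is tidier; the paper's makes the role of the spanning hypothesis $h^0(\mathcal{O}_C(1))=\binom{n+2}{2}$ more visible.
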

\begin{proof}
First we define the map in the forward direction. Suppose we have a degree 2 Veronese variety which contains $C$. Suppose the Veronese is embedded by $v_{n,2}: \mathbb{P}_\mathbb{C}^n \to \mathbb{P}_\mathbb{C}^{\binom{n+2}{2} - 1}$. Then we obtain a line bundle $\mathcal{L} \cong \mathcal{O}_{v_{n,2}^*(C)}(1)$ which squares to $\mathcal{O}_C(1)$. To be clear, $\mathcal{O}_{v_{n,2}^*(C)}(1)$ is the restriction of $\mathcal{O}_{\mathbb{P}_\mathbb{C}^n}(1)$ to $v_{n,2}^*(C)$, while $\mathcal{O}_C(1)$ is the restriction of $\mathcal{O}_{\mathbb{P}_\mathbb{C}^{\binom{n+2}{2} -1}}(1)$ to $C$. 
\par
Now we define the map in the backwards direction. Suppose we have a line bundle $\mathcal{L}$ on $C$ which squares to $\mathcal{O}_C(1)$. By assumption, $\dim H^0(C, \mathcal{L}) = n+1$, and $\dim H^0(C, \mathcal{O}_C(1)) = \binom{n+2}{2}$. Let $|\mathcal{O}_C(1)|$ embed an abstract curve $\tilde{C}$ as $C$ in $\mathbb{P}_\mathbb{C}^{\binom{n+2}{2} - 1}$. Then a complete linear system $|\mathcal{L}|$ embeds $\tilde{C}$ as a curve in $\mathbb{P}_\mathbb{C}^n$, and there is a degree 2 Veronese map whose composition with $|\mathcal{L}|$ is $|\mathcal{O}_C(1)|$. In particular, we obtain a degree 2 Veronese variety which contains $C$. We would like to show that it is unique to the isomorphism class of $\mathcal{L}$. It suffices to show the following: suppose that we have two degree 2 Veronese varieties $X_1$ and $X_2$ in $\mathbb{P}_\mathbb{C}^{\binom{n+2}{2} - 1}$ which contain $C$. Let $v^{(1)}_{n,2} , v^{(2)}_{n,2}: \mathbb{P}_\mathbb{C}^n \to \mathbb{P}_\mathbb{C}^{\binom{n+2}{2} - 1}$ denote
embeddings associated to $X_1$ and $X_2$, respectively, such that $(v_{n,2}^{(1)})^*(C)$ and $(v_{n,2}^{(2)})^*(C)$ 
are projectively equivalent curves in $\mathbb{P}_\mathbb{C}^n$. Then $X_1 = X_2$.
\par
Let us now prove this. Since all Veronese varieties are projectively equivalent, there is an automorphism $\sigma$ of $\mathbb{P}_\mathbb{C}^{\binom{n+2}{2} - 1}$ which sends $X_1$ to $X_2$. Let $C_2 \subset X_2$ denote the image of $C$ under $\sigma$. Since $(v_{n,2}^{(1)})^*(C)$ and $(v_{n,2}^{(2)})^*(C)$ are projectively equivalent curves in $\mathbb{P}_\mathbb{C}^n$ by assumption, there is an automorphism of $\mathbb{P}_\mathbb{C}^n$ sending $(v_{n,2}^{(2)})^*(C_2)$ to $(v_{n,2}^{(2)})^*(C)$. Thus, there is an automorphism $\sigma'$ of $\mathbb{P}_\mathbb{C}^{\binom{n+2}{2} - 1 }$ stabilizing $X_2$ and sending $C_2$ to $C$. Then the images of $\sigma' \circ \sigma \circ v_{n,2}^{(1)}$ and $v_{n,2}^{(2)}$ are both $X_2$, so $\sigma' \circ \sigma$ is an automorphism of $\mathbb{P}_\mathbb{C}^{\binom{n+2}{2} - 1}$ which sends $X_1$ to $X_2$, and is identity on $C$. But the span of $C$ is all of $\mathbb{P}_\mathbb{C}^{\binom{n+2}{2} - 1}$, which implies that $\sigma' \circ \sigma$ must be the identity automorphism of $\mathbb{P}_\mathbb{C}^{\binom{n+2}{2} - 1}$. In particular, this forces $X_1 = X_2$. Altogether, the two maps we have described are inverses to each other, thereby establishing the bijection.
\end{proof}

Hence, if our auxiliary curve $C$ satisfies condition (1), then the Riemann-Roch theorem implies 
\[
\dim H^0(C, \mathcal{O}_C(1)) = \binom{n+2}{2} \text{ and } \dim H^0(C, \mathcal{L}) = n+1
\]
for every square root $\mathcal{L}$ of $\mathcal{O}_C(1)$, and thus
\[
H^0(V_{n,2}, \mathcal{N}_{V_{n,2}/\mathbb{P}_\mathbb{C}^{\binom{n+2}{2} - 1}} \otimes \mathcal{I}_C) = 0. 
\]
Furthermore, if the auxiliary curve satisfies condition (2), then 
\[
	H^0(V_{n,2}, \mathcal{N}_{V_{n,2}/\mathbb{P}_\mathbb{C}^{\binom{n+2}{2} - 1}} \otimes \mathcal{I}_{p_1, \cdots, p_{\binom{n+2}{2} + n + 1}}) \cong H^0(V_{n,2}, \mathcal{N}_{V_{n,2}/\mathbb{P}_\mathbb{C}^{\binom{n+2}{2} - 1}} \otimes \mathcal{I}_C) = 0,
	\]
thereby proving interpolation. Indeed, when $n$ is odd, we can obtain an auxiliary curve satisfying conditions (1) and (2) by smoothing a certain nodal curve which we call a rational normal curve chain.

\section{Rational normal curve chain}\label{sectionRNCChain}

In this section we construct certain nodal curves, which we call rational normal curve chains, and show that they satisfy certain desirable properties that allow us to smooth it to an auxiliary curve satisfying Properties (1) and (2) of Strategy \ref{strategy}. 
\par
Fix a degree 2 Veronese embedding $v_{n,2}: \mathbb{P}_\mathbb{C}^n \to \mathbb{P}_\mathbb{C}^{\binom{n+2}{2}}$, where $n$ is odd and $n \geq 3$. We now construct a degenerate curve in $\mathbb{P}_\mathbb{C}^n$ of degree $\frac{n(n+1)}{2}$ and arithmetic genus $\frac{n(n-1)}{2}$. First, consider $\frac{n+1}{2}$ rational normal curves $R_i$, where $1 \leq i \leq \frac{n+1}{2}$. Note $\frac{n+1}{2}$ is an integer since $n$ is odd. Glue $R_1$ and $R_2$ at some $n+1$ nodal intersections, where the $n+1$ points of intersection are in general position. Note that such a configuration is possible because the space $\Hilb^{(z_1, \cdots, z_{n+1})}_{R}$, which parameterizes rational normal curves passing through $n+1$ fixed points in general position in $\mathbb{P}_\mathbb{C}^n$, has dimension $2n-2$. Then glue $R_2$ and $R_3$ at $n+1$ points of nodal intersection such that the points of intersection are in general position, and are distinct from the previous $n+1$ points of intersection between $R_1$ and $R_2$. Continue this gluing procedure for $R_{i}$ and $R_{i+1}$, where $3 \leq i \leq \frac{n-1}{2}$. We claim that these rational normal curves can be chosen so that they do not intersect anywhere else besides the specified nodal intersections.

\begin{lemma}
    Let $n \geq 3$. Fix points $z_1, \cdots, z_{n+1} \in \mathbb{P}_\mathbb{C}^n$ in general position. In general, two rational normal curves in $\mathbb{P}_\mathbb{C}^n$ intersecting nodally at $z_1, \cdots, z_{n+1}$ do not intersect anywhere else. 
\end{lemma}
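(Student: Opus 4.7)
The plan is a dimension count. Let $U := \Hilb^{(z_1, \ldots, z_{n+1})}_R$, the $(2n-2)$-dimensional space of rational normal curves through $z_1, \ldots, z_{n+1}$. I will show that the bad locus
\[
B := \{(R_1, R_2) \in U \times U : R_1 \cap R_2 \supsetneq \{z_1, \ldots, z_{n+1}\}\}
\]
is a proper subvariety of $U \times U$. By fixing a generic $R_1 \in U$, it suffices to show the slice $B_{R_1} := \{R_2 \in U : R_1 \cap R_2 \supsetneq \{z_i\}\}$ has $\dim B_{R_1} < 2n - 2$.

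To estimate $\dim B_{R_1}$, I introduce the incidence variety
\[
J_{R_1} := \{(R_2, p) \in U \times (R_1 \setminus \{z_1, \ldots, z_{n+1}\}) : p \in R_2\},
\]
observe that its projection to $U$ has generically finite fibers (two distinct rational normal curves meet in finitely many points), so $\dim B_{R_1} \leq \dim J_{R_1}$, and bound $\dim J_{R_1}$ via the other projection $J_{R_1} \to R_1 \setminus \{z_i\}$. The fiber over $p$ is the space of rational normal curves through the $n+2$ points $\{z_1, \ldots, z_{n+1}, p\}$, of some dimension $d(p)$. For generic $p$ these are in general position, so $d(p) = n - 1$.

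The real content is to control $d(p)$ for $p$ in the jumping locus. Upper semicontinuity applied to the evaluation map $\mathcal{R} \to \mathbb{P}_\mathbb{C}^n$ (with $\mathcal{R} \to U$ the universal family, of total dimension $2n - 1$) gives $\dim V_k \leq n - k$ for $V_k := \{p \in \mathbb{P}_\mathbb{C}^n : d(p) \geq n - 1 + k\}$. The strongest input I need is $d(p) \leq 2n - 3$ for $p \notin \{z_i\}$, equivalently that $\bigcap_{R \in U} R = \{z_1, \ldots, z_{n+1}\}$; I verify this using the classical identification $\mathcal{N}_{R/\mathbb{P}_\mathbb{C}^n} \cong \mathcal{O}_{\mathbb{P}^1}(n+2)^{\oplus(n-1)}$, under which $T_R U = H^0(\mathbb{P}^1, \mathcal{O}(1)^{\oplus(n-1)})$. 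Since $\mathcal{O}_{\mathbb{P}^1}(1)$ is globally generated, evaluation of this tangent space at any $p \in R \setminus \{z_i\}$ is surjective, so $R$ admits deformations in $U$ moving off $p$.

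Finally, since $\mathcal{R} \to \mathbb{P}_\mathbb{C}^n$ is dominant, a generic $R_1 \in U$ is not contained in any proper closed subvariety $V_k$, so $R_1 \cap V_k$ is finite for $k \geq 1$. Stratifying $R_1 \setminus \{z_i\}$ by the value of $d(p)$: the open stratum contributes $1 + (n-1) = n$ to $\dim J_{R_1}$, and each jump stratum ($1 \leq k \leq n-2$) contributes $0 + (n - 1 + k) \leq 2n - 3$. For $n \geq 3$, the total bound $\dim B_{R_1} \leq \dim J_{R_1} \leq \max(n, 2n - 3) = 2n - 3 < 2n - 2 = \dim U$ forces $B_{R_1}$ proper in $U$, which proves the lemma. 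The main obstacle is the normal bundle verification of $\bigcap_{R \in U} R = \{z_i\}$; once that is established, the semicontinuity and stratification steps are routine.
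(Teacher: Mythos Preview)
Your argument is correct and uses the same core idea as the paper---a dimension count on an incidence correspondence---but your execution is considerably more elaborate than necessary. The paper works with the full incidence $\Phi \subseteq \Sigma \times (\mathbb{P}_\mathbb{C}^n \setminus \{z_i\})$, where $\Sigma = U \times U \setminus \Delta$, and projects to $\mathbb{P}_\mathbb{C}^n$. Over the dense open where $\{z_1, \ldots, z_{n+1}, p\}$ are in general position, every fiber is the product of two copies of $\Hilb_R^{(z_1,\ldots,z_{n+1},p)}$ (minus the diagonal), irreducible of dimension $2(n-1)$; hence $\dim \Phi = n + 2(n-1) = 3n-2 < 4n-4 = \dim \Sigma$, and the projection to $\Sigma$ cannot be dominant. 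No slicing by a fixed $R_1$, no normal-bundle computation, no stratification.

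The reason your jumping-locus analysis is unnecessary is that $d(p)$ never actually jumps \emph{up} on $\mathbb{P}_\mathbb{C}^n \setminus \{z_i\}$. If $p$ lies on the hyperplane $H$ spanned by some $n$ of the $z_i$, then any rational normal curve through $z_1,\ldots,z_{n+1}$ meets $H$ in exactly those $n$ points (by degree), so cannot pass through $p$; the fiber over such $p$ is empty. Thus $d(p)=n-1$ identically on the open locus and your sets $V_k$ for $k\geq 1$ are empty away from the $z_i$. Your normal-bundle verification that $\bigcap_{R\in U} R = \{z_i\}$ is valid but overkill---the hyperplane-section observation gives it for free. Projecting to the full $\mathbb{P}_\mathbb{C}^n$ rather than to a fixed curve $R_1$ makes this uniformity visible at once and eliminates the need for semicontinuity and stratification.
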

\begin{proof}
    Note the space of rational normal curves passing through these $n+1$ points has dimension $2n-2$. Then the locus 
    \[
    \Sigma = \Hilb^{(z_1, \cdots, z_{n+1})}_R \times \Hilb^{(z_1, \cdots, z_{n+1})}_R \setminus \Delta,
    \]
    where $\Delta$ denotes the diagonal, parameterizing pairs of distinct rational normal curves passing through $z_1, \cdots, z_{n+1}$ has dimension $4n-4$. Let 
    \[
    \Phi \subseteq \Sigma \times (\mathbb{P}_\mathbb{C}^n \setminus \{ z_1, \cdots, z_{n+1} \})
    \]
    be the incidence correspondence given by $\{ ([R], [R'], z) | z \in R \cap R' \}$. Note $R \cap R'$ is either empty or a point. Let $U \subseteq \mathbb{P}_\mathbb{C}^n \setminus \{ z_1, \cdots, z_{n+1} \}$ denote the dense open subset of points $z_{n+2}$ such that $\{ z_1, \cdots, z_{n+2} \}$ are points in general position. 
    Then $\Phi$ projects surjectively to $U$, and the fiber over every point $z_{n+2} \in U$ will be the locus 
    \[
    \Hilb_R^{(z_1, \cdots, z_{n+2})} \times \Hilb_R^{(z_1, \cdots, z_{n+2})} \setminus \Delta
    \]
    parameterizing pairs of distinct rational normal curves passing through $z_1, \cdots, z_{n+2}$, which is irreducible of dimension $2n-2$. Since the fibers and the base $U$ are irreducible, we have $\dim \Phi = n +2n-2 = 3n-2$, so the image of $\Phi$ in its projection to $\Sigma$ has dimension at most $3n-2$, while $\dim \Sigma = 4n-4$. Since $n \geq 3$, we obtain our conclusion. 
\end{proof}

If $n = 3$, then we have finished showing that we can pick rational normal curves which do not intersect anywhere else besides at the prescribed nodal intersections. Now suppose $n \geq 5$. We prove the following. 

\begin{lemma}\label{threeRNCnoMoreIntersection}
    In general, three rational normal curves $R_1, R_2,$ and $R_3$, where $R_1$ and $R_2$ are glued nodally at $n+1$ general points and $R_2$ and $R_3$ are glued nodally at different $n+1$ general points, do not intersect anywhere else. 
\end{lemma}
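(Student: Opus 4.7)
The plan is to repeat the dimension count of the previous lemma with two separate sets of prescribed passage conditions. Denote the $n+1$ general nodal points of $R_1\cap R_2$ by $y_1,\dots,y_{n+1}$, and the $n+1$ general nodal points of $R_2\cap R_3$ by $z_1,\dots,z_{n+1}$. By applying the previous lemma to each of the pairs $(R_1,R_2)$ and $(R_2,R_3)$ in turn, we may assume that $R_1\cap R_2=\{y_1,\dots,y_{n+1}\}$ and $R_2\cap R_3=\{z_1,\dots,z_{n+1}\}$ for the generic such curves, so it remains to show that a generic pair $(R_1,R_3)$ satisfies $R_1\cap R_3=\emptyset$.

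To this end, set
\[
\Sigma := \Hilb^{(y_1,\dots,y_{n+1})}_R \times \Hilb^{(z_1,\dots,z_{n+1})}_R,
\]
the parameter space of admissible pairs $(R_1,R_3)$, which has dimension $2(2n-2)=4n-4$ by the preceding dimension computation. Form the incidence correspondence
\[
\Phi := \bigl\{([R_1],[R_3],w)\in\Sigma\times(\mathbb{P}_\mathbb{C}^n\setminus\{y_i,z_j\}) : w\in R_1\cap R_3\bigr\}.
\]
Let $U\subset\mathbb{P}_\mathbb{C}^n\setminus\{y_i,z_j\}$ denote the dense open subset of points $w$ for which both $\{y_1,\dots,y_{n+1},w\}$ and $\{z_1,\dots,z_{n+1},w\}$ are in linearly general position; $U$ is nonempty because the $y_i$'s (respectively the $z_j$'s) lie on a rational normal curve and are thus already in linearly general position, so a generic $w$ preserves that. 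The projection $\Phi\to\mathbb{P}_\mathbb{C}^n$ factors through $U$, and for $w\in U$ the fiber is
\[
\Hilb^{(y_1,\dots,y_{n+1},w)}_R \times \Hilb^{(z_1,\dots,z_{n+1},w)}_R,
\]
each factor of which is irreducible of dimension $n-1$ by the standard codimension-per-point count for rational normal curves in $\mathbb{P}_\mathbb{C}^n$. Hence $\dim\Phi\le n+2(n-1)=3n-2$, so the image of the projection $\Phi\to\Sigma$ has dimension at most $3n-2<4n-4=\dim\Sigma$ for $n\ge 5$. Therefore the generic pair $(R_1,R_3)\in\Sigma$ has $R_1\cap R_3=\emptyset$, as required.

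I do not foresee a substantive obstacle here: the argument is a direct adaptation of the preceding lemma's dimension count, now run on the product of two different fibered Hilbert schemes. The only minor points to check are the nonemptiness of $U$ (immediate from linear general position of $n+1$ points on a rational normal curve) and the fiber dimension $n-1$ (a routine codimension-per-point computation), both of which are benign.
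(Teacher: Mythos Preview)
Your argument is correct and is essentially the same as the paper's: you fix $R_2$ and the two sets of $n+1$ nodal points, invoke the previous lemma for the pairs $(R_1,R_2)$ and $(R_2,R_3)$, and then run the identical incidence-correspondence dimension count on $\Sigma=\Hilb^{(y_1,\dots,y_{n+1})}_R\times\Hilb^{(z_1,\dots,z_{n+1})}_R$ to conclude $\dim\Phi\le 3n-2<4n-4$ for $n\ge 5$. Your computation of the fiber dimension as $2(n-1)$ is in fact slightly more explicit than the paper's, which simply asserts the fiber is irreducible of dimension $2n-2$.
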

\begin{proof}
    Suppose we fix $R_2$ and points $p_1, \cdots, p_{n+1}$ in general position on $R_2$, and another distinct set of points $ q_1, \cdots, q_{n+1}$ in general position on $R_2$. We know by the previous lemma that, in general, the $R_1$ and $R_3$ we pick to go through $p_1, \cdots, p_{n+1}$ and $q_1, \cdots, q_{n+1}$, respectively, will both not intersect $R_2$ anywhere else. Then it remains to show that, in general, $R_1$ and $R_3$ will not intersect anywhere else.
    \par
    Let $\Sigma_1 = \Hilb^{(p_1, \cdots, p_{n+1})}_{R_2}$ denote the locus of rational normal curves which pass through $p_1, \cdots, p_{n+1}$. Let $\Sigma_3 = \Hilb^{(q_1, \cdots, q_{n+1})}_{R_2}$ denote the locus of rational normal curves which pass through $q_1, \cdots, q_{n+1}$. We have $\dim \Sigma_1 = \dim \Sigma_3 = 2n-2$. Let 
    \[
    \Phi \subseteq \Sigma_1 \times \Sigma_3 \times (\mathbb{P}_\mathbb{C}^n \setminus \{ p_j, q_j \} )
    \]
    be the incidence correspondence given by $\{ (R_1, R_3, p) | R_i \in \Sigma_i, p \in R_1 \cap R_3 \}$. Let $U \subseteq (\mathbb{P}_\mathbb{C}^n \setminus \{ p_j, q_j \} )$ denote the dense open subset of points $x$ such that $\{ p_1, \cdots, p_{n+1}, x \}$ and $\{ q_1, \cdots, q_{n+1}, x \}$ are both collections of points in general position. Then $\Phi$ projects surjectively onto $U$, and the fiber
    of the projection over $x \in U$ will be irreducible of dimension $2n-2$. Then $\dim \Phi = 2n-2 + n = 3n-2$. Then the dimension of the projection of $\Phi$ to $\Sigma_1 \times \Sigma_3$ is at most $3n-2$, while $\dim(\Sigma_1 \times \Sigma_3) = 4n-4$. Thus, in general, $R_1$ and $R_3$ will not intersect anywhere else, and both will not intersect $R_2$ anywhere else. 
\end{proof}

This case of three rational normal curves shows us how to do the inductive step in general.

\begin{lemma}
    Let $n$ be an odd positive integer such that $n \geq 3$. The rational normal curves $R_1, \cdots, R_{\frac{n+1}{2}}$ in the rational normal curve chain in $\mathbb{P}_\mathbb{C}^n$ can be chosen in such a way that there are no other intersections besides the prescribed nodal intersections.
\end{lemma}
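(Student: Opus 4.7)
The plan is to proceed by induction on the number of rational normal curves added to the chain, using essentially the same incidence-correspondence technique that proved the three-curve case in Lemma \ref{threeRNCnoMoreIntersection}. The base cases ($k=2$ and $k=3$) are already handled by the previous two lemmas; I would set up an inductive hypothesis of the form: after $k$ steps one may choose $R_1, \ldots, R_k$ inside $\mathbb{P}_{\mathbb{C}}^n$ meeting consecutively and nodally at the prescribed $n+1$ general points, with no other pairwise intersections whatsoever.

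For the inductive step, fix such a general configuration $R_1, \ldots, R_k$ and fix a new collection $q_1, \ldots, q_{n+1}$ of points on $R_k$ in general position and distinct from $R_{k-1} \cap R_k$. The curve $R_{k+1}$ is to be chosen from $\Sigma_{k+1} := \Hilb_R^{(q_1, \ldots, q_{n+1})}$, which has dimension $2n-2$. For each $i$ with $1 \leq i \leq k-1$ (so $R_i$ is non-adjacent to $R_{k+1}$), I would set up the incidence correspondence
\[
\Phi_i \subseteq \Sigma_{k+1} \times (R_i \setminus \{q_1, \ldots, q_{n+1}\}), \qquad \Phi_i = \{([R_{k+1}], p) : p \in R_{k+1} \cap R_i\},
\]
and project onto $R_i$. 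For a general $p \in R_i$, the fiber is the locus of rational normal curves through the $n+2$ points $q_1, \ldots, q_{n+1}, p$, which (assuming these lie in general position, a genericity statement I would verify by using that $R_i$ and $R_k$ are in general position with respect to one another) has dimension $(2n-2) - (n-1) = n-1$. Hence $\dim \Phi_i \leq n$, and its image in $\Sigma_{k+1}$ has dimension at most $n < 2n-2$ since $n \geq 3$. An entirely analogous incidence correspondence on $R_k \setminus \{q_1, \ldots, q_{n+1}\}$ shows that the locus of $R_{k+1}$ meeting $R_k$ outside the prescribed $q_j$ also has dimension at most $n$. Taking the union of these finitely many proper subvarieties of $\Sigma_{k+1}$ still gives a proper subvariety, so a general $R_{k+1} \in \Sigma_{k+1}$ has only the prescribed intersections.

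The main technical care needed is to confirm that each inductive choice preserves \emph{enough} genericity for the remaining steps, in particular that after fixing $R_1, \ldots, R_{k+1}$ one can still choose the next $n+1$ nodal points on $R_{k+1}$ in a position general with respect to all previously chosen data. This is essentially automatic because the conditions accumulated at each step are closed and proper, so a countable (in fact finite) intersection of dense open conditions remains dense. The only step that I expect to require minor care is justifying the dimension count of the fiber, i.e.\ that $q_1, \ldots, q_{n+1}, p$ really do impose independent conditions on rational normal curves for $p$ ranging over a dense open of $R_i$; this follows because the $q_j$ are general points of $R_k$, $R_i$ meets $R_k$ either transversely at finitely many points or not at all by the previous lemmas, and thus the combined collection is generically in linear general position. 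Once this is in hand, a straightforward iteration of the inductive step from $k=3$ up to $k = \tfrac{n+1}{2}$ completes the proof.
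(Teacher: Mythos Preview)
Your proposal is correct and uses the same core technique as the paper: induction on the length of the chain together with an incidence-correspondence dimension count reducing to the space of rational normal curves through $n+2$ points in general position. The only structural difference is that you append one new curve $R_{k+1}$ at each step and check it against every previously chosen $R_i$, whereas the paper fixes a middle subchain and appends a curve at each end, reducing the new check to the situation of Lemma~\ref{threeRNCnoMoreIntersection}; your one-at-a-time organization is if anything cleaner, and the genericity concern you flag (that $q_1,\ldots,q_{n+1},p$ are in general position for all but finitely many $p\in R_i$) is easily dispatched since $R_i$ is nondegenerate and hence meets each of the finitely many bad hyperplanes in only finitely many points.
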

\begin{proof}
    We showed the base case with just two rational normal curves. Suppose we have shown that, in general, a chain of $i$ rational normal curves do not intersect anywhere else for $i < \frac{n+1}{2}$. Then we show that the same conclusion holds for a chain of $i+1$ rational normal curves. Pick some chain of $(i-2)$ rational normal curves $R_2, \cdots, R_{i-1}$ which do not intersect anywhere else except at the prescribed nodal intersections. Choose $p_1, \cdots, p_{n+1}$ in general position on $R_2$ which are distinct from the points of $R_2 \cap R_3$, and choose $q_1, \cdots, q_{n+1}$ in general position on $R_{i-1}$, which are distinct from the points of $R_{i-2} \cap R_{i-1}$. 
    \par
    In general, we can pick $R_1$ passing through $p_1, \cdots, p_{n+1}$ so that it does not intersect $R_2, \cdots, R_{i-1}$ anywhere else, and we can pick $R_i$ passing through $q_1, \cdots, q_{n+1}$ so that it does not intersect $R_2, \cdots, R_{i-1}$ anywhere else. Then the proof that $R_1$ and $R_i$ can be chosen so that they do not intersect anywhere else is analogous to the proof given in Lemma \ref{threeRNCnoMoreIntersection}. 
\end{proof}

\begin{proposition}\label{arithmeticGenusRNCChain}
Fix $n$ to be an odd integer greater than 1. A rational normal curve chain $C$, constructed with rational normal curves $R_1, \cdots, R_{\frac{n+1}{2}}$ of degree $n$, in $\mathbb{P}_\mathbb{C}^n$ is of degree $\frac{n(n+1)}{2}$ and arithmetic genus $\frac{n(n-1)}{2}$.
\end{proposition}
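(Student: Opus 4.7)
The plan is to compute both invariants by reducing to the normalization, which is just a disjoint union of smooth rational curves.

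For the degree, I would simply note that $C = R_1 \cup R_2 \cup \cdots \cup R_{(n+1)/2}$ as a reduced curve, and that degree is additive on reduced components. Each $R_i$ is a degree $n$ rational normal curve in $\mathbb{P}^n_\mathbb{C}$, so $\deg C = \frac{n+1}{2} \cdot n = \frac{n(n+1)}{2}$.

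For the arithmetic genus, I would first count the nodes. The prescribed nodal intersections occur only between consecutive components $R_i$ and $R_{i+1}$, contributing $n+1$ nodes each, for $i = 1, \ldots, \frac{n+1}{2} - 1 = \frac{n-1}{2}$. By the preceding lemmas, the $R_i$ meet only at these prescribed points, so the total number of nodes is
\[
\delta = (n+1) \cdot \frac{n-1}{2} = \frac{n^2-1}{2}.
\]
Let $\nu \from \tilde{C} \to C$ denote the normalization, which is the disjoint union of $k = \frac{n+1}{2}$ smooth rational curves. From the short exact sequence
\[
0 \to \mathcal{O}_C \to \nu_* \mathcal{O}_{\tilde{C}} \to \bigoplus_{\text{nodes}} \mathbb{C} \to 0,
\]
I get $\chi(\mathcal{O}_C) = \chi(\mathcal{O}_{\tilde{C}}) - \delta = k - \delta$, so
\[
p_a(C) = 1 - \chi(\mathcal{O}_C) = \delta - k + 1 = \frac{n^2-1}{2} - \frac{n+1}{2} + 1 = \frac{n(n-1)}{2}.
\]

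There is no real obstacle here; the only thing to double-check is that the curve is connected (so that $p_a$ really is $1 - \chi(\mathcal{O}_C)$ in the naive sense), which is immediate from the chain structure since $R_i$ meets $R_{i+1}$ at $n+1 \geq 4$ points. The entire proof is thus a short bookkeeping argument using the normalization sequence.
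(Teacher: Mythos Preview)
Your proof is correct and follows essentially the same approach as the paper: both compute the degree by summing over components and compute the arithmetic genus via the normalization exact sequence, counting the $\frac{(n+1)(n-1)}{2}$ nodes. Your version is slightly more streamlined in that you work directly with Euler characteristics rather than computing $h^0$ and $h^1$ separately, but the underlying argument is identical.
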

\begin{proof}
    The rational normal curve chain $C$ is of degree $\frac{n(n+1)}{2}$ since each of its irreducible components $R_1, \cdots, R_{\frac{n+1}{2}}$ is of degree $n$. The arithmetic genus of $C$ is defined to be
    \[
    p_a(C) := 1 - \chi(\mathcal{O}_C) = 1 - \dim H^0(C, \mathcal{O}_C) + \dim H^1(C, \mathcal{O}_C). 
    \]
    Suppose we have a global section $s \in H^0(C, \mathcal{O}_C)$. Then for $1 \leq j \leq \frac{n+1}{2}$, $s$ restricts to $s|_{R_j} \in H^0(R_j, \mathcal{O}_{R_j})$, which is a constant. Since $\mathcal{O}_C$ is obtained by gluing together the $\mathcal{O}_{R_j}$ over the nodal intersections, the constant $s|_{R_1}$ determines the constant $s|_{R_2}$, which determines the constant $s|_{R_3}$, and so on. Thus, we see that $\dim H^0(C, \mathcal{O}_C) = 1$. To calculate $\dim H^1(C, \mathcal{O}_C)$, consider the short exact sequence 
    \[
    0 \to \mathcal{O}_C \to v_* v^*(\mathcal{O}_C) \to Q \to 0
    \]
    induced by the normalization $v: \bigsqcup_{j=1}^{\frac{n+1}{2}} R_j \to C$. Note $Q$ is a direct sum of skyscraper sheaves supported at the nodal intersection points $\{ z_{j,k} \}$, where $1 \leq j \leq \frac{n-1}{2}$ and $1 \leq k \leq n+1$ and $z_{j,k}$ denotes the $k$-th intersection point between $R_j$ and $R_{j+1}$. The long exact sequence in cohomology induced by this short exact sequence yields 
    \[
    0 \to H^0(C, \mathcal{O}_C) \to \bigoplus_{j=1}^{\frac{n+1}{2}} H^0(R_j, \mathcal{O}_{R_j}) \to H^0(C, Q) \to H^1(C, \mathcal{O}_C) \to 0.
    \]
    Then 
    \[
    \dim H^1(C, \mathcal{O}_C) = \dim H^0(C, Q) - \dim \bigoplus_{j=1}^{\frac{n+1}{2}} H^0(R_j, \mathcal{O}_{R_j}) + \dim H^0(C, \mathcal{O}_C) = \dim H^0(C, Q) - \frac{n+1}{2} + 1.
    \]
    Note that 
     \[
    H^0(C, Q) \cong \bigoplus_{1 \leq j \leq \frac{n-1}{2}, 1 \leq k \leq n+1} \frac{ \mathcal{O}_{R_j}|_{z_{j,k}} \oplus \mathcal{O}_{R_{j+1}}|_{z_{j,k}} }{ \mathcal{O}_C|_{z_{j,k}}} \cong \bigoplus_{1 \leq j \leq \frac{n-1}{2}, 1 \leq k \leq n+1} \frac{\mathcal{O}_{R_j}(z_{j,k}) \oplus \mathcal{O}_{R_{j+1}}(z_{j,k})}{\mathcal{O}_C(z_{j,k})}
    \]
    where $\mathcal{O}_{R_j}(z_{j,k})$ denotes the fiber of the trivial bundle over the point $z_{j,k}$. Then 
    \[
    \dim H^1(C, \mathcal{O}_C) = \frac{(n+1)(n-1)}{2} - \frac{n+1}{2} + 1 = \frac{n(n-1)}{2},
    \]
    which implies our claim. 
\end{proof}

Now that we have shown Proposition \ref{arithmeticGenusRNCChain}, we verify the following, which will allow us to deform a rational normal curve chain to a smooth curve.

\begin{proposition}\label{RNCChainLineBundleNotSpecial}
    Let $C \subset \mathbb{P}_\mathbb{C}^n$ be a rational normal curve chain. Then 
    \[
    H^1(C, \mathcal{O}_C(1)) = H^1(C, \mathcal{O}_C(2)) = 0.
    \]
\end{proposition}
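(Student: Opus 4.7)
The plan is to use the normalization short exact sequence, exactly as in the proof of Proposition \ref{arithmeticGenusRNCChain}. Let $v \colon \tilde{C} = \bigsqcup_{j=1}^{(n+1)/2} R_j \to C$ be the normalization, and let $Q = v_*\mathcal{O}_{\tilde{C}}/\mathcal{O}_C$, a direct sum of $\frac{(n-1)(n+1)}{2}$ skyscraper sheaves at the nodes $z_{j,k}$. Tensoring
\[
0 \to \mathcal{O}_C \to v_*\mathcal{O}_{\tilde{C}} \to Q \to 0
\]
with the line bundle $\mathcal{O}_C(d)$, and noting that $v^*\mathcal{O}_C(d)|_{R_j} \cong \mathcal{O}_{\mathbb{P}^1}(nd)$ because each $R_j$ is a rational normal curve of degree $n$, the long exact sequence in cohomology gives
\[
\bigoplus_j H^0(R_j, \mathcal{O}_{\mathbb{P}^1}(nd)) \xrightarrow{\phi_d} H^0(C,Q) \to H^1(C, \mathcal{O}_C(d)) \to \bigoplus_j H^1(R_j, \mathcal{O}_{\mathbb{P}^1}(nd)).
\]
For $d \in \{1,2\}$ the right-hand group vanishes since $nd \geq n \geq 3$, so it suffices to prove $\phi_d$ is surjective.

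After choosing local trivializations of $\mathcal{O}_C(d)$ at each node, the map $\phi_d$ sends a tuple $(s_1, \ldots, s_{(n+1)/2})$ to the collection of local differences $(s_j(z_{j,k}) - s_{j+1}(z_{j,k}))_{j,k}$, since the stalk of $Q$ at each node is the cokernel of the diagonal $\mathbb{C} \hookrightarrow \mathbb{C}^2$. Given an arbitrary target $(c_{j,k}) \in H^0(C,Q)$, I would build a preimage inductively: set $s_1 = 0$, and for each $j = 2, \ldots, \frac{n+1}{2}$ choose $s_j \in H^0(R_j, \mathcal{O}_{\mathbb{P}^1}(nd))$ with the $n+1$ prescribed values $s_j(z_{j-1,k}) = s_{j-1}(z_{j-1,k}) - c_{j-1,k}$ for $k = 1, \ldots, n+1$. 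Then by construction $\phi_d((s_j)) = (c_{j,k})$.

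The only step that requires justification is the existence of such an $s_j$ at each stage. Since $R_j \cong \mathbb{P}^1$ and the $n+1$ nodes $z_{j-1,k}$ are pairwise distinct, prescribing $n+1$ values of a section of $\mathcal{O}_{\mathbb{P}^1}(nd)$ at these points is an instance of Lagrange interpolation of polynomials of degree at most $nd$, and is solvable because $nd + 1 \geq n+1$; for $d=1$ the section $s_j$ is uniquely determined, and for $d=2$ an $n$-dimensional family of choices remains (as a sanity check, the resulting dimension count $\sum \dim H^0(R_j, \mathcal{O}(nd)) - \dim H^0(Q)$ indeed matches $\chi(\mathcal{O}_C(d))$ computed from Proposition \ref{arithmeticGenusRNCChain}, namely $n+1$ and $\binom{n+2}{2}$ respectively). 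I do not anticipate a genuine obstacle: once the normalization sequence is in hand, the vanishing reduces to the elementary fact that polynomial interpolation at distinct points on $\mathbb{P}^1$ is always solvable.
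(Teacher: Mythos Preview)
Your proof is correct and follows essentially the same route as the paper: both reduce the vanishing to surjectivity of the restriction map $\bigoplus_j H^0(R_j,\mathcal{O}(nd)) \to H^0(C,Q)$ coming from the normalization sequence, and both establish that surjectivity via polynomial interpolation on each $R_j\cong\mathbb{P}^1$. The only cosmetic difference is that the paper hits each node's one-dimensional quotient by producing a degree-$n$ section on $R_j$ vanishing at the other $n$ nodes $z_{j,k'}$, while you build a preimage of an arbitrary target inductively along the chain; these are equivalent triangular arguments.
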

\begin{proof}
    We prove that $H^1(C, \mathcal{O}_C(1)) = 0$. The argument for proving
    $H^1(C, \mathcal{O}_C(2)) = 0$ is analogous; the proof we present here for $H^1(C, \mathcal{O}_C(1))$ carries over almost word-for-word. Let $v$ denote the normalization map 
    \[
    v: \bigsqcup_{i=1}^{\frac{n+1}{2}} R_i \to C.
    \]
    Note the pullback $\mathcal{O}_{R_i}(1)$ of $\mathcal{O}_C(1)$ to rational normal curve $R_i$ is isomorphic to $\mathcal{O}_{\mathbb{P}_\mathbb{C}^1}(n)$. The line bundle $\mathcal{O}_C(1)$ is equivalent to the data of the line bundles $\mathcal{O}_{R_i}(1)$ on each $R_i$ along with identifications of these line bundles over the nodal intersection points $\{ z_{j,k} \}$ where $1 \leq j \leq \frac{n-1}{2}$ and $1 \leq k \leq n+1$. Here, $z_{j,k}$ denotes the $k$-th nodal intersection between $R_j$ and $R_{j+1}$. The identification data consists of nonzero scalars $\{ c_{j,k} \}$ such that $c_{j,k}$ specifies an isomorphism from the fiber of $\mathcal{O}_{R_j}(1)$ to the fiber of $\mathcal{O}_{R_{j+1}}(1)$ over $z_{j,k}$. Pushing and pulling $\mathcal{O}_C(1)$ along the normalization map, we obtain the short exact sequence
    \[
    0 \to \mathcal{O}_C(1) \to v_* v^* \mathcal{O}_C(1) \to Q \to 0.
    \]
    For points $z \not \in \{ z_{j,k} \}$, we have an isomorphism of stalks $\mathcal{O}_C(1)|_z \cong (v_* v^* \mathcal{O}_C(1))|_z$. Over the points $z_{j,k}$ we have short exact sequences
    \[
    0 \to \mathcal{O}_C(1)|_{z_{j,k}} \to (v_* v^* \mathcal{O}_C(1))|_{z_{j,k}} \to Q|_{z_{j,k}} \to 0.
    \]
    So $Q$ is supported only at the points $z_{j,k}$. Since
$(v_* v^* \mathcal{O}_C(1))|_{z_{j,k}} \cong \mathcal{O}_{R_j}(1)|_{z_{j,k}} \oplus \mathcal{O}_{R_{j+1}}(1)|_{z_{j,k}}$, we have 
\[
Q|_{z_{j,k}} \cong \frac{\mathcal{O}_{R_j}(1)|_{z_{j,k}} \oplus \mathcal{O}_{R_{j+1}}(1)|_{z_{j,k}}}{\mathcal{O}_C(1)|_{z_{j,k}}},
\]
where the denominator is really the image of the injective map $\mathcal{O}_C(1)|_{z_{j,k}} \to (v_* v^* \mathcal{O}_C(1))|_{z_{j,k}}$. Note $\mathcal{O}_C(1)|_{z_{j,k}}$ consists of elements $(\eta, \xi) \in \mathcal{O}_{R_j}(1)|_{z_{j,k}} \oplus \mathcal{O}_{R_{j+1}}(1)|_{z_{j,k}}$ such that $c_{j,k} \cdot \eta(z_{j,k}) = \xi(z_{j,k})$. 
The long exact sequence in sheaf cohomology of the push-pull sequence yields 
    \[
    0 \to H^0(C, \mathcal{O}_C(1)) \to \bigoplus_{i=1}^{\frac{n+1}{2}} H^0(\mathbb{P}_\mathbb{C}^1, \mathcal{O}_{\mathbb{P}_\mathbb{C}^1}(n)) \xrightarrow[]{\alpha}  H^0(C, Q) \to H^1(C, \mathcal{O}_C(1)) \to 0.
    \]
    Then $H^1(C, \mathcal{O}_C(1)) = 0$ if and only if the map $\alpha$ is surjective. Note
    \[
    H^0(C, Q) \cong \bigoplus_{1 \leq j \leq \frac{n-1}{2}, 1 \leq k \leq n+1} \frac{ \mathcal{O}_{R_j}(1)|_{z_{j,k}} \oplus \mathcal{O}_{R_{j+1}}(1)|_{z_{j,k}} }{ \mathcal{O}_C(1)|_{z_{j,k}}} \cong \bigoplus_{1 \leq j \leq \frac{n-1}{2}, 1 \leq k \leq n+1} \frac{\mathcal{O}_{R_j}(1)(z_{j,k}) \oplus \mathcal{O}_{R_{j+1}}(1)(z_{j,k})}{\mathcal{O}_C(1)(z_{j,k})}
    \]
    where, for example, $\mathcal{O}_{R_j}(1)(z_{j,k})$ denotes the fiber of the line bundle over the point $z_{j,k}$.
    \par
    To show surjectivity of $\alpha$, let us first focus on the $z_{1,1}$ term. There is a homogeneous degree $n$ polynomial $F_{11}$ of $H^0(R_1, \mathcal{O}_{R_1}(1)) \cong H^0(\mathbb{P}_\mathbb{C}^1, \mathcal{O}_{\mathbb{P}_\mathbb{C}^1}(n))$ which does not vanish at $z_{1,1}$ but vanishes at $z_{1,2}, \cdots, z_{1,n+1}$. Then $\alpha$ maps $(F_{11}, 0, \cdots, 0)$ to  zero everywhere except in 
    \[
    \frac{\mathcal{O}_{R_1}(1)(z_{1,1}) \oplus \mathcal{O}_{R_{2}}(1)(z_{1,1})}{\mathcal{O}_C(1)(z_{1,1})}, 
    \]
    which is a one-dimensional vector space. Thus, we have surjectivity with respect to $z_{1,1}$. An analogous argument can be made for each $z_{j,k}$. Thus, $\alpha$ is surjective. Hence, $H^1(C, \mathcal{O}_C(1)) = 0$. 
\end{proof}

Since a rational normal curve chain lies in $\mathbb{P}_\mathbb{C}^n$, we can think about how the normal bundle of a Veronese variety of dimension $n$ restricts to the rational normal curve chain. In particular, we show the following.

\begin{proposition}\label{RNCChainMPointsVanishing}
    There exist distinct smooth points $p_1, \cdots, p_{\binom{n+2}{2} + n + 1}$ on a rational normal curve chain $C \subset \mathbb{P}_\mathbb{C}^n$ such that 
    \[
    H^0(C, \mathcal{N}_{V_{n,2}/\mathbb{P}_\mathbb{C}^{\binom{n+2}{2} - 1}}|_C \otimes \mathcal{I}_{p_1, \cdots, p_{\binom{n+2}{2} + n+1}}) = 0.
    \]
\end{proposition}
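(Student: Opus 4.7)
The plan is to find the $M = \binom{n+2}{2}+n+1$ points by distributing them across the $\tfrac{n+1}{2}$ rational normal curve components of $C$ in an unbalanced pattern, then propagate vanishing component-by-component using the node constraints. The crucial technical input is the splitting type of the restriction $\mathcal{F}_i := \mathcal{N}_{V_{n,2}/\mathbb{P}_\mathbb{C}^{\binom{n+2}{2}-1}}|_{R_i}$ on each $R_i \cong \mathbb{P}_\mathbb{C}^1$. I would prove $\mathcal{F}_i \cong \mathcal{O}_{\mathbb{P}_\mathbb{C}^1}(2n+2)^{\binom{n+1}{2}}$ (balanced) by combining the short exact sequence
\[
0 \to T_{\mathbb{P}_\mathbb{C}^n}|_{R_i} \to v_{n,2}^* T_{\mathbb{P}_\mathbb{C}^{\binom{n+2}{2}-1}}|_{R_i} \to \mathcal{F}_i \to 0,
\]
the classical splitting $T_{\mathbb{P}_\mathbb{C}^n}|_{R_i} \cong \mathcal{O}(n+1)^n$ for rational normal curves, and the identification $v_{n,2}^* T_{\mathbb{P}_\mathbb{C}^{\binom{n+2}{2}-1}}|_{R_i} \cong \mathcal{O}(2n+1)^{2n} \oplus \mathcal{O}(2n)^{\binom{n}{2}}$, which follows from the fact that $v_{n,2}(R_i)$ is a rational normal curve of degree $2n$ spanning a $\mathbb{P}_\mathbb{C}^{2n} \subseteq \mathbb{P}_\mathbb{C}^{\binom{n+2}{2}-1}$ (the relevant extension splits because $\mathrm{Ext}^1(\mathcal{O}(2n),\mathcal{O}(2n+1)) = H^1(\mathbb{P}_\mathbb{C}^1,\mathcal{O}(1)) = 0$). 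Since the rank $\binom{n+1}{2}$ and degree $n(n+1)^2$ of $\mathcal{F}_i$ are then forced, the balanced splitting reduces to excluding any summand of degree $\geq 2n+3$.

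Given this splitting, I assign $m_1 = 2n+3$ generic smooth points $P_1 \subset R_1$ and $m_i = n+2$ generic smooth points $P_i \subset R_i$ for $2 \leq i \leq \tfrac{n+1}{2}$, giving $\sum m_i = 2n+3 + \tfrac{n-1}{2}(n+2) = \tfrac{(n+1)(n+4)}{2} = M$. Any global section $s \in H^0(C, \mathcal{F}|_C \otimes \mathcal{I}_P)$ pulls back along the normalization $\nu \colon \bigsqcup R_i \to C$ to sections $s_i \in H^0(R_i, \mathcal{F}_i)$ vanishing on $P_i$ and satisfying $s_i(z_{i,k}) = s_{i+1}(z_{i,k})$ at each node. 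On $R_1$, the $2n+3$ vanishing conditions at generic points make every summand of $\mathcal{F}_1 \otimes \mathcal{I}_{P_1}$ isomorphic to $\mathcal{O}(-1)$, which has no sections, forcing $s_1 = 0$. Inductively, once $s_{i-1} = 0$, the node conditions force $s_i$ to vanish at the $n+1$ nodes $z_{i-1,k}$ as well; together with the $n+2$ chosen points, this yields $2n+3$ vanishing conditions on $s_i$, and the same argument kills $H^0(R_i, \mathcal{F}_i \otimes \mathcal{I}_{\bullet})$. General position of the combined conditions on each $R_i$ is ensured because the nodes are in general position on $R_i$ by construction of the rational normal curve chain, and the $P_i$ are chosen generically after.

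The main obstacle is establishing the balanced splitting of $\mathcal{F}_i$. The rank and degree are forced, so the problem reduces to verifying that no summand of degree $\geq 2n+3$ appears, equivalently that $H^0(R_i, \mathcal{F}_i(-2n-3)) = 0$. A clean approach is to exploit the $\PGL_2$-equivariance of the restriction: the $\PGL_2$-action on $R_i \cong \mathbb{P}_\mathbb{C}^1$ extends to $\mathbb{P}_\mathbb{C}^n$ via the $\PGL_2$-representation $V_n = H^0(\mathbb{P}_\mathbb{C}^1, \mathcal{O}(n))$, and makes every bundle in the above short exact sequence $\PGL_2$-equivariant; the Clebsch--Gordan decomposition of $\mathrm{Sym}^2 V_n$ then constrains the possible splitting strongly enough to pin down $\mathcal{F}_i$. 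Alternatively, a direct analysis of the Veronese differential on the rational normal curve (multiplication by the tautological section) should produce the splitting with enough bookkeeping.
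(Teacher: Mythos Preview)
Your proposal is correct and follows essentially the same route as the paper: the same point distribution ($2n+3$ on $R_1$, $n+2$ on each subsequent $R_i$), the same cascading vanishing argument through the nodes, and the same key input that $\mathcal{N}_{V_{n,2}/\mathbb{P}_\mathbb{C}^{\binom{n+2}{2}-1}}|_{R_i} \cong \mathcal{O}_{\mathbb{P}_\mathbb{C}^1}(2n+2)^{\binom{n+1}{2}}$. The only difference is that the paper imports this balanced splitting as a black box from \cite[Theorem 4.3]{ray}, whereas you sketch a proof via the normal sequence and $\PGL_2$-equivariance; you correctly flag that the long exact sequence alone does not kill $H^0(\mathcal{F}_i(-2n-3))$ (since $H^1(T_{\mathbb{P}_\mathbb{C}^n}|_{R_i}(-2n-3)) \neq 0$), so some additional argument such as the equivariant one is genuinely needed.
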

\begin{proof}
    The vector bundle $\mathcal{N}_{V_{n,2}/\mathbb{P}_\mathbb{C}^{\binom{n+2}{2} - 1}}|_C$ is equivalent to the data of the vector bundles $\mathcal{N}_{V_{n,2}/\mathbb{P}_\mathbb{C}^{\binom{n+2}{2} - 1}}|_{R_j}$ and linear isomorphisms $\phi_{jk}$ identifying the fiber of $\mathcal{N}_{V_{n,2}/\mathbb{P}_\mathbb{C}^{\binom{n+2}{2} - 1}}|_{R_j}$ over $z_{j,k}$ with the fiber of $\mathcal{N}_{V_{n,2}/\mathbb{P}_\mathbb{C}^{\binom{n+2}{2} - 1}}|_{R_{j+1}}$ over $z_{j,k}$. Note 
    \[
    \mathcal{N}_{V_{n,2}/\mathbb{P}_\mathbb{C}^{\binom{n+2}{2} - 1}}|_{R_j} \cong \bigoplus_{i=1}^{\frac{n(n+1)}{2}} \mathcal{O}_{\mathbb{P}_\mathbb{C}^1}(2n+2)
    \]
    by \cite[Theorem 4.3]{ray}. Pick any $2n+3$ smooth points on $R_1$, then pick $n+2$ smooth points on each of the other $\frac{n-1}{2}$ rational normal curves. Twisting down $\mathcal{N}_{V_{n,2}/\mathbb{P}_\mathbb{C}^{\binom{n+2}{2} - 1}}|_{R_1}$ by $2n+3$ points and, for $j\neq 1$, twisting down $\mathcal{N}_{V_{n,2}/\mathbb{P}_\mathbb{C}^{\binom{n+2}{2} - 1}}|_{R_j}$ by $n+2$ points yields
    \[
    \mathcal{N}_{V_{n,2}/\mathbb{P}_\mathbb{C}^{\binom{n+2}{2} - 1}}|_{R_1} \otimes \mathcal{I}_{p_1, \cdots, p_{2n+3}} \cong \bigoplus_{i=1}^{\frac{n(n+1)}{2}} \mathcal{O}_{\mathbb{P}_\mathbb{C}^1}(-1) \text{ and }
    \]
    \[
    \mathcal{N}_{V_{n,2}/\mathbb{P}_\mathbb{C}^{\binom{n+2}{2} - 1}}|_{R_j} \otimes \mathcal{I}_{p_{2n+3+(n+2)(j-2) + 1}, \cdots, p_{2n+3 + (n+2)(j-1)}} \cong \bigoplus_{i=1}^{\frac{n(n+1)}{2}} \mathcal{O}_{\mathbb{P}_\mathbb{C}^1}(n), \text{ for } 2 \leq j \leq \frac{n+1}{2}.
    \]
    A global section $s$ of $\mathcal{N}_{V_{n,2}/\mathbb{P}_\mathbb{C}^{\binom{n+2}{2} - 1}}|_C \otimes \mathcal{I}_{p_1, \cdots, p_{\binom{n+2}{2} + n+1}}$ is equivalent to the data of a global section $s_1$ of $ \mathcal{N}_{V_{n,2}/\mathbb{P}_\mathbb{C}^{\binom{n+2}{2} - 1}}|_{R_1} \otimes \mathcal{I}_{p_1, \cdots, p_{2n+3}}$ and global sections $s_j$ of $\mathcal{N}_{V_{n,2}/\mathbb{P}_\mathbb{C}^{\binom{n+2}{2} - 1}}|_{R_1} \otimes \mathcal{I}_{p_1, \cdots, p_{2n+3}}$, for $j \neq 1$, where $s_i$ and $s_{i+1}$ are compatible with the identification $\phi_{ik}$ of fibers over $z_{i,k}$. So given such a global section $s$, note that $s_1$ must vanish completely over $R_1$. Furthermore, since $s_2$ and $s_1$ are compatible over the points $\{ z_{1,k} \}_{1 \leq k \leq n+1}$, we must have that $s_2$ vanishes at the $n+1$ nodal intersections. Since $s_2$ is a tuple of homogeneous polynomials of degree $n$, this forces $s_2$ to vanish everywhere on $R_2$. Again by compatibility, this forces $s_3$ to vanish everywhere on $R_3$, and so on. This implies that all the $s_j$ must vanish, hence $s=0$.
\end{proof}

\begin{remark}
    Proposition \ref{RNCChainMPointsVanishing} illustrates that if one seeks to smooth a degenerate curve to obtain an auxiliary curve satisfying Properties (1) and (2) of Strategy \ref{strategy}, then it is helpful to know how the Veronese normal bundle restricts to the pieces from which the degenerate curve is built out of. Our knowledge of how degree 2 Veronese normal bundles restrict to rational normal curves motivates our construction with only rational normal curves, which is possible when $n$ is odd \cite[Theorem 1.3]{ray}.
\end{remark}

\section{Smoothing the rational normal curve chain}\label{sectionSmoothingRNCChain}

Fix a degree 2 Veronese embedding $v_{n,2}: \mathbb{P}_\mathbb{C}^n \to \mathbb{P}_\mathbb{C}^{\binom{n+2}{2}-1}$, where $n$ is odd and $n \geq 3$. Let $V_{n,2}$ denote the Veronese variety. In this section we deform a rational normal curve chain $C \subset \mathbb{P}^n_\mathbb{C}$ to a smooth curve $\tilde{C} \subset \mathbb{P}^n_\mathbb{C}$ of degree $\frac{n(n+1)}{2}$ and genus $\frac{n(n-1)}{2}$ satisfying the following properties: 
\begin{enumerate}
    \item $\mathcal{O}_{\tilde{C}}(2)$ and all of its square roots are very ample non-special line bundles.
    \item There exist distinct smooth points $\widetilde{p}_1, \cdots, \widetilde{p}_{\binom{n+2}{2} + n + 1}$ on $v_{n,2}(\tilde{C})$ such that 
    \[
    H^0(v_{n,2}(\tilde{C}), \mathcal{N}_{V_{n,2}/\mathbb{P}_\mathbb{C}^{\binom{n+2}{2} - 1}}|_{v_{n,2}(\tilde{C})} \otimes \mathcal{I}_{\widetilde{p_1}, \cdots, \widetilde{p}_{\binom{n+2}{2} + n+1}}) = 0,
    \]
    where $\mathcal{I}_{\widetilde{p_1}, \cdots, \widetilde{p}_{\binom{n+2}{2} + n+1}}$ is the ideal sheaf of the points with respect to $v_{n,2}(\tilde{C})$. 
\end{enumerate}
Such a curve $\tilde{C}$ would be a satisfactory auxiliary curve in the sense of Strategy \ref{strategy}, allowing us to invoke Proposition \ref{globalSectionDimensionSpecification}. In this section, we first address Property (1), resulting in Proposition \ref{finallyFindingAuxiliaryCurve}. Afterwards, we perform an upper semi-continuity argument to address Property (2). Finally, we conclude our proof of interpolation for degree 2 Veroneses of odd dimension, and demonstrate the enumerative lower bound described by Theorem \ref{mainResult}.
\par
Let us first address Property (1). Recall from Section \ref{sectionRNCChain} that a rational normal curve chain $C \subset \mathbb{P}_\mathbb{C}^n$ is of degree $\frac{n(n+1)}{2}$ and arithmetic genus $\frac{n(n-1)}{2}$ and
\[
H^1(C, \mathcal{O}_C(1)) = H^1(C, \mathcal{O}_C(2)) = 0,
\]
by Proposition \ref{RNCChainLineBundleNotSpecial}. Since the rational normal curve chain $C$ is reduced with nodal singularities and $H^1(C, \mathcal{O}_C(1)) = 0$, there exists a flat family $\mathcal{X} \subseteq \mathbb{P}_\mathbb{C}^{n} \times T$ over an integral $\mathbb{C}$-scheme of finite type $T$, such that the special fiber $\mathcal{X}_0$ is $C$ and there are smooth fibers $\mathcal{X}_t$ for some $t \neq 0$ \cite[Proposition 29.9]{hartshorneDeformation}. Since $T$ is integral and Noetherian, all fibers of the family $\mathcal{X}$ share the same Hilbert polynomial \cite[Chapter 3, Proposition 9.9]{hartshorne}. This implies that we can deform $C$ to a smooth curve in $\mathbb{P}_\mathbb{C}^n$
of degree $n(n+1)$ and genus $\frac{n(n-1)}{2}$. 
\par
Since the projection $\mathcal{X} \to T$ is a proper flat morphism of finite presentation and $T$ is irreducible, the existence of a smooth fiber implies that the projection is smooth over a dense open subset of $T$ by properness and \cite[IV-4-12.2.4]{ega4}. Thus, there is a Zariski open subset of the base $T$, over which the fibers of $\mathcal{X}$ are smooth. Note the function $\dim H^1(\mathcal{X}_t, \mathcal{O}_{\mathcal{X}_t}(1))$ is an upper semi-continuous function of $t \in T$ \cite[Chapter 3, Theorem 12.8]{hartshorne}. Then there is a Zariski open subset $T^o_1 \subseteq T$ over which the fibers $\mathcal{X}_t$ are smooth curves of degree $\frac{n(n+1)}{2}$ and genus $\frac{n(n-1)}{2}$, embedded in $\mathbb{P}_\mathbb{C}^{n}$ by the complete linear system of a non-special very ample line bundle $\mathcal{O}_{\mathcal{X}_t}(1)$. We can consider our family $\mathcal{X}$ as a family of curves in $\mathbb{P}^{\binom{n+2}{2} - 1}$ by the following embedding over $T$: 
\[
\mathcal{X} \to \mathbb{P}^n_\mathbb{C} \times T \xrightarrow[]{(v_{n,2} \times \text{id})} \mathbb{P}^{\binom{n+2}{2} - 1} \times T.
\]
Again, by upper semi-continuity, we have $H^1(\mathcal{X}_t, \mathcal{O}_{\mathcal{X}_t}(2)) = 0$ for $t$ in a Zariski open subset $T^o_2 \subseteq T$. This argument shows that there is a Zariski open subset $T^o \subseteq T$, such that every fiber of the family $\mathcal{X}|_{T^o} \to T^o$ over a $\mathbb{C}$-point $t \in T^o$ is a smooth curve $\mathcal{X}_t \subset \mathbb{P}_\mathbb{C}^n$ of degree $\frac{n(n+1)}{2}$ and genus $\frac{n(n-1)}{2}$, whose very ample line bundles $\mathcal{O}_{\mathcal{X}_t}(1)$ and $\mathcal{O}_{\mathcal{X}_t}(2)$ are non-special.
\par
We are not quite done yet. We need to find a smooth curve $C'$ such that all square roots of $\mathcal{O}_{C'}(2)$ are very ample non-special line bundles. At the moment, we have demonstrated existence of curves $\mathcal{X}_t$ such that just one of their square roots, namely $\mathcal{O}_{\mathcal{X}_t}(1)$, is guaranteed to be very ample and non-special.

\begin{proposition}\label{finallyFindingAuxiliaryCurve}
    There exists a smooth curve $C' \subset \mathbb{P}_\mathbb{C}^n$ of degree $\frac{n(n+1)}{2}$ and genus $\frac{n(n-1)}{2}$, such that 
    $\mathcal{O}_{C'}(2)$ and all of its square roots are very ample non-special line bundles. 
\end{proposition}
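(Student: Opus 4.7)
Plan. I would refine $T^o$ by imposing Property (1) and show the resulting subset is non-empty. My approach is to work over an irreducible component $\mathcal{H}$ of the Hilbert scheme containing the smooth fibers $\mathcal{X}_t$, form a finite étale parameterization of all square roots of $\mathcal{O}(2)$, and invoke a Brill-Noether argument on a generic fiber to conclude.

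Let $\mathcal{U} \to \mathcal{H}$ denote the universal family and $\mathcal{H}^\circ \subseteq \mathcal{H}$ the open locus of smooth fibers (non-empty by the upper semi-continuity already used in this section). Form the relative Picard scheme $\mathcal{P} := \Pic^{n(n+1)/2}_{\mathcal{U}/\mathcal{H}^\circ}$ and inside it the closed subscheme $\mathcal{S}$ of pairs $(t,L)$ with $L^{\otimes 2} \cong \mathcal{O}_{\mathcal{U}_t}(2)$. Since $\mathcal{S}$ is a torsor under the 2-torsion group scheme $\Pic^0[2]$, which is finite étale of order $2^{n(n-1)}$ in characteristic $0$, the structure map $f \colon \mathcal{S} \to \mathcal{H}^\circ$ is finite étale of degree $2^{n(n-1)}$. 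Let $\mathcal{V} \subseteq \mathcal{P}$ denote the open locus where the universal line bundle is both very ample and non-special (open by standard upper semi-continuity for $h^0$ in flat families and for the tangent/separation conditions cutting out very ampleness). Since $f$ is finite, $f(\mathcal{S} \setminus (\mathcal{S} \cap \mathcal{V}))$ is closed in $\mathcal{H}^\circ$, and its open complement is precisely the locus of smooth fibers for which every square root of $\mathcal{O}(2)$ is very ample and non-special.

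To show this complement is non-empty, I would argue by Brill-Noether that on a general smooth curve $C$ of genus $g = n(n-1)/2$, the set of $L \in \Pic^d(C)$ (where $d = n(n+1)/2$) such that $L \otimes \eta$ is very ample and non-special for every $\eta \in \Pic^0(C)[2]$ is a dense open subset. Failure of non-specialness of $L \otimes \eta$ places $L$ in a translate of $W^0_{(n^2-3n-4)/2}(C)$, and failure of very ampleness places $L$ in a translate of $W^{n-1}_{d-2}(C) + C^{(2)}$; the Brill-Noether computation $\rho(g, n-1, d-2) = n(n-3)/2$ shows that each of these bad loci has dimension strictly less than $g$ for $n \geq 3$, hence is a proper closed subset of the irreducible $\Pic^d(C)$. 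A finite intersection over the $2^{n(n-1)}$ values of $\eta$ of dense opens remains dense open, hence non-empty. The main obstacle is transporting this statement to a generic fiber of $\mathcal{H}$: I would verify that the classifying map $\mathcal{H} \to \mathcal{P}^d_{\mathcal{M}_g}$ to the universal Picard scheme over $\mathcal{M}_g$ is dominant via the dimension identity $\dim \mathcal{H} = h^0(\mathcal{N}_{C/\mathbb{P}_\mathbb{C}^n}) = 3(n^2-1) = (4g-3) + ((n+1)^2-1) = \dim \mathcal{P}^d_{\mathcal{M}_g} + \dim \PGL_{n+1}$. Dominance ensures a generic smooth fiber of $\mathcal{H}^\circ$ corresponds to a generic pair $(C, \mathcal{O}(1))$ in $\mathcal{P}^d_{\mathcal{M}_g}$, so the Brill-Noether conclusion applies and produces the desired curve $C'$.
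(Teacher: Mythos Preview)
Your proposal is correct and shares the paper's overall architecture (reduce to an open condition on the relative Picard scheme over the moduli of curves, then show non-emptiness), but the execution of the key non-emptiness step is genuinely different. The paper appeals to a result of Keem asserting that the Hilbert scheme $\mathcal{H}_{\mathrm{NS}}$ of linearly normal non-special curves of this degree and genus is irreducible, and then argues that the locus in the universal $\Pic^{n(n+1)}$ over $\mathcal{M}_g$ where all square roots are non-special is open; it deduces that the preimage $U$ in $\mathcal{H}_{\mathrm{NS}}$ is open and asserts it is non-empty. Your route instead bypasses Keem entirely: you establish non-emptiness directly via Brill--Noether on a general curve (the loci $W^0_{(n^2-3n-4)/2}$ and $W^{n-1}_{d-2}+C^{(2)}$ each have dimension strictly below $g$ for $n\ge 3$, so finitely many translates cannot cover $\Pic^d$), and you transport this to the Hilbert scheme by verifying dominance of $\mathcal{H}\to \mathcal{P}^d_{\mathcal{M}_g}$ through the dimension identity $3(n^2-1)=(4g-3)+\dim\PGL_{n+1}$, using $H^1(\mathcal{N}_{C/\mathbb{P}^n})=0$ for non-special $\mathcal{O}_C(1)$ to get the correct dimension of $\mathcal{H}$.

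What each buys: the paper's approach is shorter once one grants Keem's irreducibility theorem, but it leaves the non-emptiness of $U$ somewhat implicit (one still needs that $\pi_L$ is dominant, which the paper does not spell out). Your approach is more self-contained and makes the non-emptiness fully explicit, at the cost of invoking the Brill--Noether dimension theorem (Griffiths--Harris/Gieseker) and the irreducibility of $\mathcal{M}_g$. Either way the statement follows; your version would in fact patch the gap in the paper's argument where the non-emptiness of $U\cap\mathcal{H}_{\mathrm{NS}}^{\mathrm{red}}$ is asserted without proof.
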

\begin{proof}
Let $\mathcal{H}_{\frac{n(n+1)}{2}, \frac{n(n-1)}{2}, n}$ denote the Hilbert scheme of smooth, irreducible, and non-degenerate curves of degree $\frac{n(n+1)}{2}$ and genus $\frac{n(n-1)}{2}$ in $\mathbb{P}^n_\mathbb{C}$. We denote by $\mathcal{H}_{\text{NS}}$ the union of those components of $\mathcal{H}_{\frac{n(n+1)}{2}, \frac{n(n-1)}{2}, n}$ whose general element is linearly normal. Since $n + \frac{n(n-1)}{2} = \frac{n(n+1)}{2}$, the general element of $\mathcal{H}_{\text{NS}}$ is non-special and $\mathcal{H}_{\text{NS}}$ is irreducible \cite[page 1103]{keem}. The family $\mathcal{X}|_{T^o} \to T^o$ is obtained by pulling back along a morphism $T^o \to \mathcal{H}_{\text{NS}}$. 
    \par
    Let $M_{\frac{n(n-1)}{2}}$ denote the moduli space of smooth curves of genus $\frac{n(n-1)}{2}$. Let $\Pic^d$ denote the universal Picard scheme of degree $d$ over $M_{\frac{n(n-1)}{2}}$, which projects onto $M_{\frac{n(n-1)}{2}}$ so that its fiber over a $\mathbb{C}$-point $[X] \in M_{\frac{n(n-1)}{2}}$ is the Picard variety $\Pic^d(X)$ of degree $d$ line bundles on $X$. In particular, there is a universal squaring map between the universal Picard schemes of degrees $\frac{n(n+1)}{2}$ and $n(n+1):$ 
    % https://q.uiver.app/#q=WzAsMyxbMCwwLCJcXFBpY157XFxmcmFje24obisxKX17Mn19Il0sWzIsMCwiXFxQaWNee24obisxKX0iXSxbMSwxLCJNX3tcXGZyYWN7bihuLTEpfXsyfX0iXSxbMCwxLCJzcSJdLFswLDJdLFsxLDJdXQ==
\[\begin{tikzcd}
	{\Pic^{\frac{n(n+1)}{2}}} && {\Pic^{n(n+1)}} \\
	& {M_{\frac{n(n-1)}{2}}}
	\arrow["\text{sq}", from=1-1, to=1-3]
	\arrow[from=1-1, to=2-2]
	\arrow[from=1-3, to=2-2]
\end{tikzcd}\]
    Note the locus of non-special line bundles of degree $n(n+1)$ is a Zariski open subset of $\Pic^{n(n+1)}$. The locus of $\Pic^{n(n+1)}$ which parameterizes pairs $(C,L)$, where $C$ is a genus $\frac{n(n-1)}{2}$ curve and $L$ is a line bundle of degree $n(n+1)$ whose square roots are all non-special, is also Zariski open. By irreducibility of $\Pic^{n(n+1)}$, their intersection, which we call $U_{\text{NS}, \sqrt{\text{NS}}}$, is a Zariski open of $\Pic^{n(n+1)}$ parameterizing pairs $(C,L)$ where $L$ is a non-special line bundle of degree $n(n+1)$ on $C$ whose square roots are all non-special. Then $\text{sq}^{-1}(U_{\text{NS}, \sqrt{\text{NS}}})$ is a Zariski open subset of $\Pic^{\frac{n(n+1)}{2}}$. 
    \par
    Next, consider the natural map $\pi_L: 
    \mathcal{H}_{\text{NS}} \to \Pic^{\frac{n(n+1)}{2}}$
    which sends $\mathbb{C}$-points $[C]$ to pairs $(C, \mathcal{O}_C(1))$. The map $\pi_L$ factors through both the locus of non-special line bundles and the locus of very ample line bundles of $\Pic^{\frac{n(n+1)}{2}}$, both of which are Zariski opens. Their intersection is a Zariski open, which must also intersect $\text{sq}^{-1}(U_{\text{NS}, \sqrt{\text{NS}}})$ by irreducibility of $\Pic^{\frac{n(n+1)}{2}}$. Altogether, this implies that $U := \pi_L^{-1}(\text{sq}^{-1}(U_{\text{NS}, \sqrt{\text{NS}}}))$ is a Zariski open subset of $\mathcal{H}_{\text{NS}}$. 
    \par
    Finally, since $T^o \to \mathcal{H}_{\text{NS}}$ is a morphism whose domain is integral, it factors through the reduced locus 
    \[
    T^o \to  \mathcal{H}^{\text{red}}_{\text{NS}} \to \mathcal{H}_{\text{NS}}
    \]
    by \cite[lemma 26.12.7]{stacks-project}. Note that $U \cap \mathcal{H}^{\text{red}}_{\text{NS}} $ is nonempty. Then since the reduced locus $\mathcal{H}^{\text{red}}_{\text{NS}}$ is irreducible, we can utilize it as a base space over which we can deform a $\mathcal{X}_t$, for some $\mathbb{C}$-point $t \in T^o$, to a smooth curve $C' \subset \mathbb{P}_\mathbb{C}^n$ of degree $\frac{n(n+1)}{2}$ and genus $\frac{n(n-1)}{2}$, such that $\mathcal{O}_{C'}(2)$ and all of its square roots are very ample non-special line bundles. 
\end{proof}

Proposition \ref{finallyFindingAuxiliaryCurve} shows that we can find a smooth curve $C'$ satisfying Property (1). We now address Property (2). Recall, by Proposition \ref{RNCChainMPointsVanishing}, there exist smooth points $p_1, \cdots, p_{\binom{n+2}{2} + n + 1}$ on a rational normal curve chain $C$ such that 
\[
H^0(C, \mathcal{N}_{V_{n,2}/\mathbb{P}_\mathbb{C}^{\binom{n+2}{2} - 1}}|_C \otimes \mathcal{I}_{p_1, \cdots, p_{\binom{n+2}{2} + n+1}}) = 0.
\]
First, let $\pi^*_{\mathbb{P}^n_\mathbb{C}}(\mathcal{N}_{V_{n,2}/\mathbb{P}^{\binom{n+2}{2} - 1}_\mathbb{C}})$ denote the pullback of the Veronese normal bundle along the projection 
\[
\pi_{\mathbb{P}^n_\mathbb{C}}: \mathbb{P}^n_\mathbb{C} \times T \to \mathbb{P}^n_\mathbb{C}.
\]
Letting $\pi^*_{\mathbb{P}^n_\mathbb{C}}(\mathcal{N}_{V_{n,2}/\mathbb{P}^{\binom{n+2}{2} - 1}_\mathbb{C}})|_{\mathcal{X}}$ denote its restriction 
to the family $\mathcal{X}$, note that its further restriction to $\mathcal{X}_t$ is isomorphic to 
\[
\mathcal{N}_{V_{n,2}/\mathbb{P}^{\binom{n+2}{2} - 1}_\mathbb{C}}|_{v_{n,2}(\mathcal{X}_t)},
\]
for every $t \in T$. Second, the points $p_1, \cdots, p_{\binom{n+2}{2} + n + 1}$ of the special fiber $\mathcal{X}_0$ deform to distinct smooth points $p_1^t, \cdots, p^t_{\binom{n+2}{2} + n + 1}$ on $\mathcal{X}_t$, for $t \in T^p$ where $T^p$ is a Zariski open of $T$ containing $0$. In particular, for $t \in T^p$, the line bundles 
\[
\mathcal{O}_{\mathcal{X}_t}(-p^t_1 - \cdots - p^t_{\binom{n+2}{2} + n + 1}) \cong \mathcal{I}_{p_1^t, \cdots, p^t_{\binom{n+2}{2} + n + 1}} 
\]
are isomorphic to the restrictions of a global line bundle $\mathcal{L}$ on $\mathcal{X}|_{T^p}$ to $\mathcal{X}_t$. Considering the vector bundle 
\[
\pi^*_{\mathbb{P}^n_\mathbb{C}}(\mathcal{N}_{V_{n,2}/\mathbb{P}^{\binom{n+2}{2} - 1}_\mathbb{C}})|_{(\mathcal{X}|_{T_p})} \otimes \mathcal{L}
\]
on $\mathcal{X}|_{T^p}$, we have by upper semi-continuity \cite[Chapter 3, Theorem 12.8]{hartshorne} that 
\begin{equation}\label{finalVanishingWeWant}
    H^0(v_{n,2}(\mathcal{X}_t), \mathcal{N}_{V_{n,2}/\mathbb{P}_\mathbb{C}^{\binom{n+2}{2} - 1}}|_{v_{n,2}(\mathcal{X}_t)} \otimes \mathcal{I}_{p^t_1, \cdots, p^t_{\binom{n+2}{2} + n+1}}) = 0
\end{equation}
for general $t \in T^p$. Considering the intersection $T^p$ with $T^o$, we have in summary: a Zariski open subset $T^{\text{final}} \subseteq T^o$ such that for $t \in T^{\text{final}} $, the fiber $\mathcal{X}_t \subset \mathbb{P}^n_\mathbb{C}$ is a smooth curve of degree $\frac{n(n+1)}{2}$ and genus $\frac{n(n-1)}{2}$ whose very ample line bundles $\mathcal{O}_{\mathcal{X}_t}(1)$ and $\mathcal{O}_{\mathcal{X}_t}(2)$ are non-special, and
\[
H^0(v_{n,2}(\mathcal{X}_t), \mathcal{N}_{V_{n,2}/\mathbb{P}^{\binom{n+2}{2} - 1}_\mathbb{C}}|_{v_{n,2}(\mathcal{X}_t)} \otimes \mathcal{I}_{p_1^t, \cdots, p^t_{\binom{n+2}{2} + n + 1}}) = 0.
\]

Finally, using the proof of Proposition \ref{finallyFindingAuxiliaryCurve}, we have a map $T^{\text{final}} \to \mathcal{H}^{\text{red}}_{NS}$. Applying the same upper semi-continuity argument but this time with $\mathcal{H}^{\text{red}}_{NS}$ as the base, we conclude that there does indeed exist smooth curves $\widetilde{C}$ with distinct points $\widetilde{p_1}, \cdots, \widetilde{p}_{\binom{n+2}{2} + n + 1}$ on $v_{n,2}(\tilde{C})$ such that
\begin{equation}\label{vanishAtPointsImplyVanishOnCurve}
    H^0(v_{n,2}(\tilde{C}), \mathcal{N}_{V_{n,2}/\mathbb{P}_\mathbb{C}^{\binom{n+2}{2} - 1}}|_{v_{n,2}(\tilde{C})} \otimes \mathcal{I}_{\widetilde{p_1}, \cdots, \widetilde{p}_{\binom{n+2}{2} + n+1}}) = 0,
\end{equation}
and $\mathcal{O}_{\tilde{C}}(2)$ and all of its square roots are very ample non-special line bundles.
\par
Now that we have found an auxiliary curve in the sense of Strategy \ref{strategy}, we conclude our proof of interpolation. First, equation \ref{vanishAtPointsImplyVanishOnCurve} implies 
\begin{equation}\label{vanishAtPointsVanishOnCurveStep2}
    H^0(V_{n,2}, \mathcal{N}_{V_{n,2}/\mathbb{P}_\mathbb{C}^{\binom{n+2}{2} -1}} \otimes \mathcal{I}_{\widetilde{p_1}, \cdots, \widetilde{p}_{\binom{n+2}{2} + n+1}}) = H^0(V_{n,2}, \mathcal{N}_{V_{n,2}/\mathbb{P}_\mathbb{C}^{\binom{n+2}{2} -1}} \otimes \mathcal{I}_{v_{n,2}(\tilde{C})}).
\end{equation}
Applying the Riemann-Roch theorem to $\mathcal{O}_{\tilde{C}}(2)$ and all of its square roots implies that the dimensions of their spaces of global sections are $\binom{n+2}{2}$ and $n+1$, respectively. Thus, Proposition 
\ref{globalSectionDimensionSpecification} holds for $v_{n,2}(\tilde{C})$, which implies that
\begin{equation}\label{finalStep}
    H^0(V_{n,2}, \mathcal{N}_{V_{n,2}/\mathbb{P}_\mathbb{C}^{\binom{n+2}{2} - 1}} \otimes \mathcal{I}_{v_{n,2}(\tilde{C})}) = 0,
\end{equation}
by the discussion in Subsection \ref{subsectionStrategy}. By Proposition \ref{goal}, equations \ref{vanishAtPointsVanishOnCurveStep2} and \ref{finalStep} imply that degree 2 Veronese varieties of odd dimension satisfy interpolation.
\par
Finally, we show the enumerative lower bound described in Theorem \ref{mainResult}. Consider the projection map in the incidence correspondence of Definition \ref{interpolationDefinition}:
\[
\Psi \to \mathcal{H}_{V_{n,2}} \times (\prod_{i=1}^{\binom{n+2}{2} + n + 1} \Gr(1, \binom{n+2}{2}) ) \times (\mathbb{P}_\mathbb{C}^{\binom{n+2}{2} - 1})^{\binom{n+2}{2} + n + 1} \to (\prod_{i=1}^{\binom{n+2}{2} + n + 1} \Gr(1, \binom{n+2}{2}) ).
\]
Interpolation implies that there is a dense open subset of $(\prod_{i=1}^{\binom{n+2}{2} + n + 1} \Gr(1, \binom{n+2}{2}) )$, over which this projection has geometric fibers which are nonempty and zero-dimensional \cite[Appendix]{landesmanDelPezzo}. Furthermore, an auxiliary curve $v_{n,2}(\tilde{C})$ satisfies Proposition \ref{globalSectionDimensionSpecification}. Since $v_{n,2}(\tilde{C})$ is smooth, there are $2^{n(n-1)}$ square roots of $\mathcal{O}_{v_{n,2}(\tilde{C})}(1) \cong \mathcal{O}_{\tilde{C}}(2)$. Thus, there are $2^{n(n-1)}$ degree 2 Veroneses of odd dimension which contain $v_{n,2}(\tilde{C})$. This demonstrates that over a closed point of $(\prod_{i=1}^{\binom{n+2}{2} + n + 1} \Gr(1, \binom{n+2}{2}) )$ corresponding to $(\widetilde{p_1}, \cdots, \widetilde{p}_{\binom{n+2}{2} + n + 1})$, the zero-dimensional fiber of this projection has at least $2^{n(n-1)}$ points. By \cite[Theorem 4.17(iii)]{lowerSemi}, the number of connected components of the geometric fibers of this projection map is a lower semicontinuous function. This implies that there is a dense open subset of $(\prod_{i=1}^{\binom{n+2}{2} + n + 1} \Gr(1, \binom{n+2}{2}) )$, over which the geometric fibers have at least $2^{n(n-1)}$ connected components. Thus, there is a dense open subset of $(\prod_{i=1}^{\binom{n+2}{2} + n + 1} \Gr(1, \binom{n+2}{2}) )$ over which the projection map has zero-dimensional fibers comprised of at least $2^{n(n-1)}$ points. This completes the proof of Theorem \ref{mainResult}.

\section{Concluding Remarks and Questions}\label{sectionConclusion}

In summary, we proved interpolation for degree 2 Veroneses of odd dimension by showing that certain normal vector fields vanish. We achieved this vanishing by utilizing certain auxiliary curves, which we found by smoothing rational normal curve chains. 

\begin{question}
Aside from surfaces, do degree 2 Veronese varieties of even dimension satisfy interpolation? 
\end{question}

The author believes the answer is likely yes, but in the case of even dimensions, rational normal curve chains will not have the desired degree and genus in the sense of Subsection \ref{strategy}. An auxiliary curve for the case of even dimensions would need to be constructed from curves beyond rational normal curves. Thus, knowledge of how degree 2 Veronese normal bundles restrict to other curves may lead to progress in this direction. We note that, in general, Veronese normal bundles restricted to rational normal curves are not well-balanced since, for degree $d$ Veroneses of dimension $n$, the degree of the first Chern class of the restriction is $\binom{n+d}{d}nd - n(n+1)$ and the rank is $\binom{n+d}{d} - n - 1$. However, besides the degree 2 case, the numerology of the first Chern class and rank of the restriction does work out in the case of degree 3 Veroneses of dimension 7. 

\begin{question}\label{deg3dim7question}
Let $\mathbb{P}_\mathbb{C}^7 \to \mathbb{P}_\mathbb{C}^{\binom{10}{3} - 1}$ be an embedding for a degree 3 Veronese of dimension 7. Let $R$ denote a rational normal curve in $\mathbb{P}_\mathbb{C}^7$. In this case, is the Veronese normal bundle restricted to the rational normal curve well-balanced? In other words, is
\[
\mathcal{N}_{V_{n,2}/\mathbb{P}_\mathbb{C}^{\binom{10}{3} - 1}}|_{R} \cong \bigoplus_{i=1}^{112} \mathcal{O}_{\mathbb{P}_\mathbb{C}^1}(22)?
\]
\end{question}

If the answer to question \ref{deg3dim7question} is yes, then we can deduce interpolation for degree 3 Veroneses of dimension 7 by, essentially word-for-word, the same argument presented in this paper. Note, however, that the well-balancedness of the restriction to rational normal curves is not a prerequisite for interpolation. For example, degree 3 Veronese surfaces satisfy interpolation \cite[Theorem 6.1]{landesmanDelPezzo} but the numerology of their normal bundles restricted to rational normal curves in $\mathbb{P}_\mathbb{C}^2$ immediately implies that they are not well-balanced.

\section{Acknowledgements}

The author thanks Anand Patel for introducing them to the questions that led to this paper, and thanks Anand Patel and Joe Harris for their invaluable mentorship, instruction, and encouragement during the research process. Without them, this project would not have been possible. The author is grateful to Aaron Landesman for helpful conversations and correspondences. The author was supported by Harvard's PRISE fellowship for a significant portion of their research time.

\bibliographystyle{alpha}
 \bibliography{bibliography}

\end{document}